\newtheorem{theorem}{Theorem}
\newtheorem{lemma}{Lemma}
\newtheorem{remark}{Remark}
\newtheorem{proposition}{Proposition}
\newtheorem{corollary}{Corollary}
\newtheoremstyle{neosn}{0.5\topsep}{0.5\topsep}{\rm}{}{\sc}{.}{ }{\thmname{#1}\thmnumber{ #2}\thmnote{ {\mdseries#3}}}
\theoremstyle{neosn}
\newtheorem{definition}{Definition}
\newcommand{\tr}{\,\mathrm{tr}\,}
\newcommand{\ad}{\,\mathrm{ad}\,}
\newcommand{\GL}{\,\mathrm{GL}\,}
\newcommand{\SL}{\,\mathrm{SL}\,}
\newcommand{\PSL}{\,\mathrm{PSL}\,}
\newcommand{\Sp}{\,\mathrm{Sp}\,}
\newcommand{\Spin}{\,\mathrm{Spin}\,}
\newcommand{\SO}{\,\mathrm{SO}\,}
\newcommand{\Aut}{\,\mathrm{Aut}\,}
\newcommand{\Hom}{\,\mathrm{Hom}\,}
\begin{document}
\begin{center}

{\Large {\bf Automorphisms of  Chevalley groups
over commutative rings }}

\bigskip
\bigskip

{\large \bf E.~I.~Bunina}

\bigskip

{\bf Bar Ilan University}

\end{center}
\bigskip

\begin{center}

{\bf Abstract.}

\end{center}

In this paper we prove that every automorphism of a
Chevalley group (or its elementary subgroup) with root system of rank $>1$ over a commutative ring  (with $1/2$ for the systems $\mathbf A_2$, $\mathbf F_4$, $\mathbf B_l$, $\mathbf C_l$; with $1/2$ and $1/3$ for the system~$\mathbf G_2$) is standard, i.\,e., it is a composition of ring, inner, central and graph automorphisms. This result finalizes description of automorphisms of Chevalley groups. However the restrictions on invertible elements can be a topic of further considerations. We provide also some model-theoretic applications of this description.

\bigskip

\section{Introduction}\leavevmode

\subsection{Automorphisms and isomorphisms of classical linear groups}\leavevmode

Automorphisms and isomorphisms of linear groups are studied by mathematicians from the beginning of XX century.
First papers on automorphisms and isomorphisms of linear groups appeared already in the beginning of the 20th century. In particular, in the  paper by Schreier and van der Warden~\cite{1928} they 
 described all automorphisms of the group $\PSL_n$ $(n\geqslant 3)$ over an arbitrary field. Later on,  Hua \cite{Hua_Agatha}  generalized this method and applied it to the description of automorphisms of symplectic groups over a field of characteristic $\ne 2$.
Diedonne \cite{D} (1951) and Rickart \cite{R1950} (1950) introduced the involution method, and described
automorphisms of the group $\GL_n$ ($n\geqslant 3)$ over a skew field, and then also of unitary and symplectic groups over skew fields of characteristic $\ne 2$~\cite{Rickart_Agatha}.

The first step towards the description of automorphisms of classical groups over rings was made by Hua and Reiner~\cite{HR}. They dealt with the case $\GL_n(\mathbb Z)$. This result 
 was extended to non-commutative principal ideal domains  by Landin and Reiner in~\cite{Landin-Reiner_Agatha} and  by Yan Shi-jian in~\cite{Shi-jian_Agatha}.

The methods of the papers mentioned above were based mostly on studying involutions in the corresponding linear groups.

O'Meara in 1976 invented very different (geometrical) method, which did not use involutions. By its aid, O"Meara described automorphisms of the group $\GL_n$ ($n\geqslant 3$) over domains~\cite{O'M2} and  automorphisms  of symplectic groups of a special form over fields (so-called \emph{groups rich in transvections})~\cite{O'M_Agatha}.
Independently, Yan Shi jian in~\cite{Shi-jian_Agatha} described  automorphisms of the group $E_n(R)$, $n\geqslant 3$, where $R$ is a domain  of characteristic $\neq 2$ using the involution method.

In the paper~\cite{McDonald_Agatha} Pomfret and MacDonald studied automorphisms of the groups $\GL_n$, $n \geqslant 3$, over a commutative local ring with~$1/ 2$. Further on, Waterhouse in~\cite{v47} obtained a description of automorphisms of the group $\GL_n$, $n \geqslant 3$, over arbitrary commutative rings with~$1/2$.

In 1982 Petechuk~\cite{v12} described automorphisms of the groups $\GL_n$, $\SL_n$ ($n\geqslant 4$)
over arbitrary commutative rings. If $n=3$, then automorphisms of  linear groups are not always standard~\cite{Petechuk2}.
They are standard either  if in a ring $2$ is invertible, or if a ring is a domain, or it is a semisimple ring.

McQueen and McDonald in~\cite{McQueen_Agatha} obtained the description of automorphisms of the groups $\Sp_n$, $n\geqslant 6$ over commutative local rings with $1/2$.  Continuing research in this direction, in 1980  Petechuk in~\cite{Petechuk1_Agatha} studied automorphisms of symplectic  groups over arbitrary commutative local rings. In 1982 he extended description of automorphisms to the case 
 $\Sp_n(R)$, $n \geqslant 6$, over arbitrary commutative ring~$R$, using the localization method, see~\cite{Petechuk2_Agatha}.

Isomorphisms of the groups $\GL_n(R)$ and $\GL_m(S)$ over arbitrary associative rings with $1/2$
for $n,m\geqslant 3$ were described in 1981 by Golubchik and \,Mikhalev~\cite{GolMikh1} and independently by
Zelmanov~\cite{v11}. In 1997 Golubchik described isomorphisms between these groups for $n,m\geqslant 4$,
over arbitrary associative rings with~$1$~\cite{Golub}.

In 1983 Golubchik and Mikhalev in~\cite{v8} studied isomorphisms of unitary linear groups over arbitrary associative rings with~$1/2$, with some conditions for the dimension of the group and the rank of  the form.
For the case when $n=2k$ and the hyperbolic rank of the form~$Q$ is maximal,  the automorphism of $U_n(R, Q)$, $k\geqslant 3$, were independently classified in 1985 by Zelmanov, see~\cite{v11}.

\subsection{Automorphisms and isomorphisms of Chevalley groups}\leavevmode

In  50-th  years of the previous century Chevalley, Steinberg and others introduced the concept of Chevalley groups
over commutative rings.
The foundations of the theory of Chevalley groups have been laid  in the papers of Chevalley, Tits, Borel, Weil, Grothendieck, Demazure,  Stenberg, etc. In 1956--1958 Chevalley obtained a classification of semisimple algebraic groups over algebraically closed fields. Later on, Chevalley showed that all semisimple groups over an algebraically closed field are actually defined under~$\mathbb Z$, or, in other words, are obtained as a result of expanding to an arbitrary ring of some group scheme defined over~$\mathbb Z$. These group schemes are called \emph{Chevalley-Demazure schemes}. The groups of points of Chevalley–Demazure schemes over commutative rings are called \emph{Chevalley groups}. Chevalley groups include classical linear groups (special linear $\SL$, special orthogonal $\SO$, symplectic $\Sp$,
spinor $\Spin$, and also projective groups connected with them) over commutative rings. Finite simple groups of Lie type are  the central quotients of Chevalley groups.

Isomorphisms and automorphisms of Chevalley groups over different classes of rings were were intensively studied.
The description of isomorphisms of Chevalley groups over fields was obtained by
Steinberg~\cite{Stb1} for the finite case and by Humphreys~\cite{H} for the infinite one. Many papers are devoted 
to description of automorphisms of Chevalley groups over 
commutative rings. We can mention here the papers of
Borel--Tits~\cite{v22}, Carter--Chen~Yu~\cite{v24},
Chen~Yu~\cite{v25}--\cite{v29}, Abe~\cite{Abe_OSN}, Klyachko~\cite{Klyachko}.

Usually complete description of automorphisms of Chevalley groups means standardity of all these automorphisms, that is, all automorphisms are compositions of some simple and well-described types of automorphisms: inner automorphisms, automorphisms induced by ring automorphisms, etc.

Abe in~\cite{Abe_OSN} proved the standardity of automorphisms for Noetherian rings with~$1/2$, which could help to close the question of automorphisms of Chevalley groups over arbitrary commutative rings with~$1/2$. However, in considering the case of adjoint elementary groups has a gap, which cannot be eliminated by the methods of this article. 

The cases when the ring contains a lot of invertible integers (in some sense) are completely clarified in the paper of  Klyachko~\cite{Klyachko}. 

In the paper  \cite{ravnyekorni} Bunina proved that automorphisms of adjoint elementary Chevalley groups
with root systems $\mathbf A_l,\mathbf D_l, \mathbf E_l$, $l\geqslant 2$, over local rings with invertible $2$
can be represented as the composition of ring automorphism and an \emph{automorphism--conjugation} (by automorphism-conjugation we call  conjugation of elements of a Chevalley group in the adjoint representation by some matrix from
the normalizer of this group in  $\GL(V)$). By the similar token it was proved in \cite{normalizers} that every automorphism of an arbitrary  Chevalley (or its arbitrary subgroup) group  is standard, i.\,e.,
it is  a composition of ring, inner, central and graph automorphisms.
In the same paper it was obtained the theorem describing the normalizer of Chevalley groups in their adjoint representation,
which also holds for local rings without $1/2$.

In the series of papers \cite{bunF4}, \cite{korni2}, \cite{BunBl}, \cite{without2}, \cite{Bunina-Werewkin2} the similar methods made it possible to obtain the standardity of all automorphisms of Chevalley groups $G(\Phi,R)$ where $\Phi=\mathbf F_4$, $\mathbf B_l$, $l\geqslant 3$, $R$ is a local ring and $1/2\in R$, or $\Phi =\mathbf G_2$  and $1/2$, $1/3 \in R$. The same is true for $\Phi=\mathbf A_l$, $\mathbf D_l$ $\mathbf E_l$, $\mathbf G_2$, $l\geqslant 2$, $R$ is a local ring and $1/2\notin R$.
As we already mentioned the case $\mathbf C_l$ (symplectic linear groups and projective symplectic linear groups) was considered in the papers of Petechuk and Golubchik--Mikhalev (even for non-commutative rings). 

The non-standard automorphisms are described by Steinberg in~\cite{Steinberg} for the cases of Chevalley groups of types $\mathbf B_2$ and $\mathbf F_4$ over fields of characteristic~$2$ and of type $\mathbf G_2$  over fields of characteristic~$3$. For fields of characteristic~$2$ also there exists an isomorphism between Chevalley groups of types $\mathbf B_l$ and $\mathbf C_l$, $l\geqslant 3$. In \cite{Petechuk2} Petechuk described (non-standard) automorphisms of Chevalley groups of the type $\mathbf A_2$ over local rings without~$1/2$. Therefore the cases of Chevalley groups of the types $\mathbf A_2, \mathbf B_l, \mathbf C_l, \mathbf F_4$ over rings without~$1/2$ and of the type~$\mathbf G_2$ over rings without~$1/3$ require separate consideration.

In the paper \cite{Bunina_main} Bunina used the localization method and ideas of Petechuk and generalized the description of automorphisms of Chevalley groups over local rings to adjoint Chevalley groups over arbitrary commutative rings. In the paper~\cite{Bunina_recent} the isomorphisms between these Chevalley groups were described. 

In this paper we extend the result of~ \cite{Bunina_main} to arbitrary Chevalley groups over rings.

The paper is organized as follows. Section 2 deals with definitions and formulation of the Main Theorem. The proof of the Main Theorem for elementary case is situated in Section 3. The next Section 4 is devoted to the proof of the Main Theorem in the general case. 

\newpage

\section{Definitions and main theorem.}\leavevmode

\subsection{Root systems and semisimple Lie algebras}\leavevmode

We fix an indecomposable root system~$\Phi$ of the rank $\ell > 1$, with the system of simple
roots~$\Delta$, the set of positive (negative) roots $\Phi^+$
($\Phi^-$), and the Weil group~$W$. Recall that  any two roots of the same length are conjugate under the action of the Weil group. Let $|\Phi^+|=m$. More detailed texts about root systems and their
properties can be found in the books \cite{Hamfris}, \cite{Burbaki}.

Recall also that for $\alpha,\beta\in \Phi$
$$
\langle \alpha,\beta\rangle =2\frac{(\alpha,\beta)}{(\beta,\beta)}.
$$

Suppose now that we have a semisimple complex Lie algebra~$\mathcal
L$ with the Cartan subalgebra~$\mathcal H$ (more details about
semisimple Lie algebras can be found, for instance, in the book~\cite{Hamfris}).

Lie algebra  $\mathcal L$ has a decomposition ${\mathcal
L}={\mathcal H} \oplus \sum\limits_{\alpha\ne 0} {\mathcal
L}_\alpha$,
$$
{\mathcal L}_\alpha:=\{ x\in {\mathcal L}\mid [h,x]=\alpha(h)x\text{
for every } h\in {\mathcal H}\},
$$
and if ${\mathcal L}_\alpha\ne 0$, then $\dim {\mathcal
L}_\alpha=1$, all nonzero $\alpha\in {\mathcal H}$ such that
${\mathcal L}_\alpha\ne 0$, form some root system~$\Phi$. The root
system $\Phi$ and the semisimple Lie algebra
 $\mathcal L$ over~$\mathbb C$
uniquely (up to automorphism) define each other.

On the Lie algebra $\mathcal L$ we can introduce a bilinear
\emph{Killing form} $\varkappa(x,y)=\tr (\ad x\ad y),$  that is
non-degenerated on~$\mathcal H$. Therefore we can identify the
spaces $\mathcal H$ and ${\mathcal H}^*$.

We can choose a basis $\{ h_1, \dots, h_l\}$ in~$\mathcal H$ and for
every $\alpha\in \Phi$ elements $x_\alpha \in {\mathcal L}_\alpha$
so that $\{ h_i; x_\alpha\}$ is a basis in~$\mathcal L$ and for
every two elements of this basis their commutator is an integral
linear combination of the elements of the same basis. This basis is
called a \emph{Chevalley basis}.

\subsection{Elementary Chevalley groups}\leavevmode

Introduce now elementary Chevalley groups (see~\cite{Steinberg}).

Let  $\mathcal L$ be a semisimple Lie algebra (over~$\mathbb C$)
with a root system~$\Phi$, $\pi: {\mathcal L}\to \mathfrak{gl}(V)$
be its finitely dimensional faithful representation  (of
dimension~$n$). If $\mathcal H$ is a Cartan subalgebra of~$\mathcal
L$, then a functional
 $\lambda \in {\mathcal H}^*$ is called a
 \emph{weight} of  a given representation, if there exists a nonzero vector $v\in V$
 (that is called a  \emph{weight vector}) such that
for any $h\in {\mathcal H}$ $\pi(h) v=\lambda (h)v.$

In the space~$V$ in the Chevalley basis all operators
$\pi(x_\alpha)^k/k!$ for $k\in \mathbb N$ are written as integral
(nilpotent) matrices. An integral matrix also can be considered as a
matrix over an arbitrary commutative ring with~$1$. Let $R$ be such
a ring. Consider matrices $n\times n$ over~$R$, matrices
$\pi(x_\alpha)^k/k!$ for
 $\alpha\in \Phi$, $k\in \mathbb N$ are included in $M_n(R)$.

Now consider automorphisms of the free module $R^n$ of the form
$$
\exp (tx_\alpha)=x_\alpha(t)=1+t\pi(x_\alpha)+t^2
\pi(x_\alpha)^2/2+\dots+ t^k \pi(x_\alpha)^k/k!+\dots
$$
Since all matrices $\pi(x_\alpha)$ are nilpotent, we have that this
series is finite. Automorphisms $x_\alpha(t)$ are called
\emph{elementary root elements}. The subgroup in $\Aut(R^n)$,
generated by all $x_\alpha(t)$, $\alpha\in \Phi$, $t\in R$, is
called an \emph{elementary Chevalley group} (notation:
$E_\pi(\Phi,R)$).

In elementary Chevalley group we can introduce the following
important elements and subgroups:

\begin{itemize}
\item $w_\alpha(t)=x_\alpha(t) x_{-\alpha}(-t^{-1})x_\alpha(t)$, $\alpha\in \Phi$,
$t\in R^*$;

\item $h_\alpha (t) = w_\alpha(t) w_\alpha(1)^{-1}$;

\item  $N$ is generated by all
 $w_\alpha (t)$, $\alpha \in \Phi$, $t\in R^*$;

\item  $H$ is generated by all
 $h_\alpha(t)$, $\alpha \in \Phi$, $t\in R^*$;

\item The subgroup $U=U(\Phi,R)$ of the Chevalley group $G(\Phi,R)$ (resp. $E(\Phi,R)$) is generated by elements $x_\alpha(t)$, $\alpha\in \Phi^+$, $t\in R$, the subgroup $V=V(\Phi,R)$ is generated by elements $x_{-\alpha}(t)$, $\alpha\in \Phi^+$ $t\in R$.
\end{itemize}

The action of  $x_\alpha(t)$ on the Chevalley basis is described in
\cite{v23}, \cite{VavPlotk1}.

It is known that the group $N$ is a normalizer of~$H$ in elementary
Chevalley group, the quotient group $N/H$ is isomorphic to the Weil
group $W(\Phi)$.

All weights of a given representation (by addition) generate a
lattice (free Abelian group, where every  $\mathbb Z$-basis  is also
a $\mathbb C$-basis in~${\mathcal H}^*$), that is called the
\emph{weight lattice} $\Lambda_\pi$.

 Elementary Chevalley groups are defined not even by a representation of the Chevalley groups,
but just by its \emph{weight lattice}. More precisely, up to an abstract
isomorphism an elementary Chevalley group is completely defined by a
root system~$\Phi$, a commutative ring~$R$ with~$1$ and a weight
lattice~$\Lambda_\pi$.

Among all lattices we can mark two: the lattice corresponding to the
adjoint representation, it is generated by all roots (the \emph{root
lattice}~$\Lambda_{ad}$) and the lattice generated by all weights of
all reperesentations (the \emph{lattice of weights}~$\Lambda_{sc}$).
For every faithful reperesentation~$\pi$ we have the inclusion
$\Lambda_{ad}\subseteq \Lambda_\pi \subseteq \Lambda_{sc}.$
Respectively, we have the \emph{adjoint} and \emph{simply connected}
elementary Chevalley groups. 

Every elementary Chevalley group satisfies the following relations:

(R1) $\forall \alpha\in \Phi$ $\forall t,u\in R$\quad
$x_\alpha(t)x_\alpha(u)= x_\alpha(t+u)$;

(R2) $\forall \alpha,\beta\in \Phi$ $\forall t,u\in R$\quad
 $\alpha+\beta\ne 0\Rightarrow$
$$
[x_\alpha(t),x_\beta(u)]=x_\alpha(t)x_\beta(u)x_\alpha(-t)x_\beta(-u)=
\prod x_{i\alpha+j\beta} (c_{ij}t^iu^j),
$$
where $i,j$ are integers, product is taken by all roots
$i\alpha+j\beta$, taken in some fixed order; $c_{ij}$ are
integer numbers not depending on $t$ and~$u$, but depending on
$\alpha$ and $\beta$ and the order of roots in the product. 

(R3) $\forall \alpha \in \Phi$ $w_\alpha=w_\alpha(1)$;

(R4) $\forall \alpha,\beta \in \Phi$ $\forall t\in R^*$ $w_\alpha
h_\beta(t)w_\alpha^{-1}=h_{w_\alpha (\beta)}(t)$;

(R5) $\forall \alpha,\beta\in \Phi$ $\forall t\in R^*$ $w_\alpha
x_\beta(t)w_\alpha^{-1}=x_{w_\alpha(\beta)} (ct)$, where
$c=c(\alpha,\beta)= \pm 1$;

(R6) $\forall \alpha,\beta\in \Phi$ $\forall t\in R^*$ $\forall u\in
R$ $h_\alpha (t)x_\beta(u)h_\alpha(t)^{-1}=x_\beta(t^{\langle
\beta,\alpha \rangle} u)$.

For a given $\alpha\in \Phi$  by $X_\alpha$ we denote the subgroup   $\{ x_\alpha
(t)\mid t\in R\}$.

\subsection{Chevalley groups}\leavevmode

Introduce now Chevalley groups (see~\cite{Steinberg},
\cite{Chevalley}, \cite{v3}, \cite{v23}, \cite{v30}, \cite{v43},
\cite{VavPlotk1}, and references therein).

Consider semisimple linear algebraic groups over algebraically
closed fields. These are precisely elementary Chevalley groups
$E_\pi(\Phi,K)$ (see.~\cite{Steinberg},~\S\,5).

All these groups are defined in $\SL_n(K)$ as  common set of zeros of
polynomials of matrix entries $a_{ij}$ with integer coefficients
 (for example,
in the case of the root system $\mathbf C_\ell$ and the universal
representation we have $n=2l$ and the polynomials from the condition
$(a_{ij})Q(a_{ji})-Q=0$, where $Q$ is a matrix of the symplectic form). It is clear now that multiplication and
taking inverse element are  defined by polynomials with integer
coefficients. Therefore, these polynomials can be considered as
polynomials over an arbitrary commutative ring with a unit. Let some
elementary Chevalley group $E$ over~$\mathbb C$ be defined in
$\SL_n(\mathbb C)$ by polynomials $p_1(a_{ij}),\dots, p_m(a_{ij})$.
For a commutative ring~$R$ with a unit let us consider the group
$$
G(R)=\{ (a_{ij})\in \SL_n(R)\mid \widetilde p_1(a_{ij})=0,\dots
,\widetilde p_m(a_{ij})=0\},
$$
where  $\widetilde p_1(\dots),\dots \widetilde p_m(\dots)$ are
polynomials having the same coefficients as
$p_1(\dots),\dots,p_m(\dots)$, but considered over~$R$.

This group is called the \emph{Chevalley group} $G_\pi(\Phi,R)$ of
the type~$\Phi$ over the ring~$R$, and for every algebraically
closed field~$K$ it coincides with the elementary Chevalley group. In more advanced terms a Chevalley group $G(\Phi,R)$ is the value of the \emph{Chevalley-Demazure group scheme}, see \cite{??}. 

The subgroup of diagonal (in the standard basis of weight vectors)
matrices of the Chevalley group $G_\pi(\Phi,R)$ is called the
 \emph{standard maximal torus}
of $G_\pi(\Phi,R)$ and it is denoted by $T_\pi(\Phi,R)$. This group
is isomorphic to $Hom(\Lambda_\pi, R^*)$.

Let us denote by $h(\chi)$ the elements of the torus $T_\pi
(\Phi,R)$, corresponding to the homomorphism $\chi\in Hom
(\Lambda(\pi),R^*)$.

In particular, $h_\alpha(u)=h(\chi_{\alpha,u})$ ($u\in R^*$, $\alpha
\in \Phi$), where
$$
\chi_{\alpha,u}: \lambda\mapsto u^{\langle
\lambda,\alpha\rangle}\quad (\lambda\in \Lambda_\pi).
$$

\subsection{Connection between Chevalley groups and their elementary subgroups}\leavevmode

Connection between Chevalley groups and corresponding elementary
subgroups is an important problem in the structure theory of Chevalley
groups over rings. For elementary Chevalley groups there exists a
convenient system of generators $x_\alpha (\xi)$, $\alpha\in \Phi$,
$\xi\in R$, and all relations between these generators are well-known.
For general Chevalley groups it is not always true.

If $R$ is an algebraically closed field, then
$$
G_\pi (\Phi,R)=E_\pi (\Phi,R)
$$
for any representation~$\pi$. This equality is not true even for the
case of fields, which are not algebraically closed.

However if $G$ is a simply connected Chevalley group and the ring $R$ is \emph{semilocal}
(i.e., contains only finite number of maximal ideals), then we have
the condition
$$
G_{sc}(\Phi,R)=E_{sc}(\Phi,R).
$$
\cite{M}, \cite{Abe1}, \cite{St3}, \cite{v19}.

If, however, $\pi$ is arbitrary and $R$ is semilocal, then: $G_\pi
(\Phi,R)=E_\pi(\Phi,R)T_\pi(\Phi,R)$] (see~\cite{Abe1}, \cite{v19},
\cite{M}), and the elements $h(\chi)$ are connected with elementary
generators by the formula
\begin{equation}\label{e4}
h(\chi)x_\beta (\xi)h(\chi)^{-1}=x_\beta (\chi(\beta)\xi).
\end{equation}

\begin{remark}\label{Remark_torus}
Since $\chi\in \Hom (\Lambda(\pi),R^*)$, if we know the values of~$\chi$ on some set of roots which generate all roots (for example, on some basis of~$\Phi$), then we know $\chi(\beta)$ for all $\beta\in \Phi$ and respectively all $x_\beta(\xi)^{h(\chi)}$ for all $\beta\in \Phi$ and $\xi\in R^*$.

Therefore (in particular) if for all roots $\beta$ from some generating set of~$\Phi$ we have $[x_\beta(1),h(\chi)]=1$, then $h(\chi)\in Z(E_\pi(\Phi, R))$ and hence $h(\chi)\in Z(G_\pi(\Phi,R))$.

We will use this observation in the next section many times.
\end{remark}

If $\Phi$ is an irreducible root system of a rank $\ell\geqslant
2$, then $E(\Phi,R)$ is always normal and even {\bf characteristic} in $G(\Phi,R)$ (see~\cite{v41}, \cite{v35}). In the case of
semilocal rings it is easy to show that
$$
[G(\Phi,R),G(\Phi,R)]=E(\Phi,R).
$$
except the cases $\Phi=\mathbf B_2, \mathbf G_2$, $R=\mathbb F_2$.

In the case $\ell=1$ the subgroup of elementary matrices
$E_2(R)=E_{sc}(\mathbf A_1,R)$ is not necessarily normal in the special linear
group $\SL_2(R)=G_{sc}(\mathbf A_1,R)$ (see~\cite{Cn}, \cite{Sw},
\cite{v13}).

In the general case the difference between $G_\pi(\Phi,R)$ and $E_\pi (\Phi,R)$ is measured by $K_1$-functor.

\subsection{Standard automorphisms of Chevalley groups}\leavevmode

Define four types of automorphisms of a Chevalley group
 $G_\pi(\Phi,R)$, we
call them  \emph{standard}.

{\bf Central automorphisms.} Let $C_G(R)$ be a center of
$G_\pi(\Phi,R)$, $\tau: G_\pi(\Phi,R) \to C_G(R)$ be some
homomorphism of groups. Then the mapping $x\mapsto \tau(x)x$ from
$G_\pi(\Phi,R)$ onto itself is an automorphism of $G_\pi(\Phi,R)$,
 denoted by~$\tau$. It is called a \emph{central automorphism}
of the group~$G_\pi(\Phi,R)$.

{\bf Ring automorphisms.} Let $\rho: R\to R$ be an automorphism of
the ring~$R$. The mapping $(a_{i,j})\mapsto (\rho (a_{i,j}))$ from $G_\pi(\Phi,R)$
onto itself is an automorphism of the group $G_\pi(\Phi,R)$, 
denoted by the same letter~$\rho$. It is called a \emph{ring
automorphism} of the group~$G_\pi(\Phi,R)$. Note that for all
$\alpha\in \Phi$ and $t\in R$ an element $x_\alpha(t)$ is mapped to
$x_\alpha(\rho(t))$.

{\bf Inner automorphisms.} Let $S$ be some ring containing~$R$,  $g$
be an element of $G_\pi(\Phi,S)$, that normalizes the subgroup $G_\pi(\Phi,R)$. Then
the mapping $x\mapsto gxg^{-1}$  is an automorphism
of the group~$G_\pi(\Phi,R)$, denoted by $i_g$.  It is called an
\emph{inner automorphism}, \emph{induced by the element}~$g\in G_\pi(\Phi,S)$. If $g\in G_\pi(\Phi,R)$, then we call $i_g$ a \emph{strictly inner}
automorphism.

{\bf Graph automorphisms.} Let $\delta$ be an automorphism of the
root system~$\Phi$ such that $\delta \Delta=\Delta$. Then there
exists a unique automorphisms of $G_\pi (\Phi,R)$ (we denote it by
the same letter~$\delta$) such that for every $\alpha \in \Phi$ and
$t\in R$ an element $x_\alpha (t)$ is mapped to
$x_{\delta(\alpha)}(\varepsilon(\alpha)t)$, where
$\varepsilon(\alpha)=\pm 1$ for all $\alpha \in \Phi$ and
$\varepsilon(\alpha)=1$ for all $\alpha\in \Delta$.

Now suppose that $\delta_1,\dots, \delta_k$ are all different graph automorphisms for the given root system  (for the systems $\mathbf E_7,\mathbf E_8,\mathbf B_l,\mathbf C_l,\mathbf F_4,\mathbf G_2$ there can be just identical automorphism, for the systems $\mathbf A_l,\mathbf  D_l,  l\ne 4, \mathbf E_6$ there are two such
automorphisms, for the system $\mathbf D_4$ there are six automorphisms). Suppose that we have a system of orthogonal idempotents of
the ring~$R$:
$$
\{\varepsilon_1, \dots, \varepsilon_k\mid \varepsilon_1+\dots+\varepsilon_k=1, \forall i\ne j\ \varepsilon_i\varepsilon_j=0\}.
$$
Then the mapping
$$
\Lambda_{\varepsilon_1,\dots,\varepsilon_k}:= \varepsilon_1 \delta_1+\dots+ \varepsilon_k \delta_k
$$
of the Chevalley group onto itself is an  automorphism,  called a \emph{graph automorphism} of the Chevalley
group $G_\pi (\Phi,R)$.

 Similarly we can define four types of automorphisms of the elementary
subgroup~$E_\pi(\Phi,R)$. An automorphism~$\sigma$ of the group
 $G_\pi(\Phi,R)$ (or $E_\pi(\Phi,R)$)
is called  \emph{standard} if it is a composition of automorphisms
of these introduced four types.

In \cite{Bunina_main} the following theorem was proved:

\begin{theorem}\label{main_adjoint}
Let $G=G_{\ad}(\Phi,R)$ 
be an adjoint Chevalley group (or its elementary subgroup $(E_{\ad}(\Phi,R))$) of rank $>1$,  $R$ be a commutative ring with~$1$. Suppose that for $\Phi = \mathbf A_2, \mathbf B_l, \mathbf C_l$ or $\mathbf F_4$ we have $1/2\in R$, for $\Phi=\mathbf G_2$ we have $1/2,1/3 \in R$. Then every automorphism of the group~$G$
is standard and the inner automorphism in the composition is strictly inner.
\end{theorem}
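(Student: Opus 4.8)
The plan is to run Petechuk's localization method, reducing the theorem over an arbitrary commutative $R$ to the local case, which is already settled by the results recalled in the Introduction. To set up the reduction I handle the elementary and full groups uniformly: since $\Phi$ has rank $>1$, the subgroup $E_{\ad}(\Phi,R)$ is characteristic in $G_{\ad}(\Phi,R)$, so every automorphism restricts to the elementary group, while over each localization $R_\mathfrak m$ (which is local, hence semilocal) one has $G_{\ad}(\Phi,R_\mathfrak m)=E_{\ad}(\Phi,R_\mathfrak m)\,T_{\ad}(\Phi,R_\mathfrak m)$, with the torus tied to the elementary generators through \eqref{e4}. Thus the local theorems apply to all of $G_{\ad}(\Phi,R_\mathfrak m)$, and it is enough to produce a standard form of an automorphism $\phi$ after passing to the localizations and then to descend that form to $R$.

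For each maximal ideal $\mathfrak m$ the localization $R_\mathfrak m$ still contains $1/2$ (respectively $1/2$ and $1/3$) whenever $R$ does, so the local theorems for $\mathbf A_l,\mathbf D_l,\mathbf E_l,\mathbf B_l,\mathbf F_4,\mathbf G_2$ (\cite{ravnyekorni}, \cite{normalizers}, \cite{bunF4}, \cite{korni2}, \cite{BunBl}, \cite{without2}) and, for $\mathbf C_l$, the results of Petechuk \cite{Petechuk2_Agatha} and Golubchik--Mikhalev \cite{GolMikh1} apply. The technical core here is to manufacture, from the single automorphism $\phi$ of $G_{\ad}(\Phi,R)$, a genuine automorphism $\phi_\mathfrak m$ of $G_{\ad}(\Phi,R_\mathfrak m)$ for every $\mathfrak m$; this rests on showing that $\phi$ respects the congruence structure so that it passes to the localizations. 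Granting this, each $\phi_\mathfrak m$ is standard, i.e.\ a composition of a ring automorphism $\rho_\mathfrak m$ of $R_\mathfrak m$, a graph automorphism $\delta_\mathfrak m\in\{\delta_1,\dots,\delta_k\}$, a central automorphism, and conjugation by some $g_\mathfrak m$.

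The descent then proceeds in stages. First I show the graph type $\mathfrak m\mapsto\delta_\mathfrak m$ is locally constant on $\mathrm{Spec}\,R$; the resulting clopen partition yields a system of orthogonal idempotents $\varepsilon_1+\dots+\varepsilon_k=1$, and composing $\phi$ with $\Lambda_{\varepsilon_1,\dots,\varepsilon_k}^{-1}$ removes the graph part. Next I check that the local ring automorphisms $\rho_\mathfrak m$ are the localizations of a single ring automorphism $\rho$ of $R$ (they are forced to agree because they record the action of $\phi$ on the ``coordinates'' $t\mapsto\rho_\mathfrak m(t)$ of the root elements $x_\alpha(t)$), and compose it away. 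What remains is, locally, conjugation by $g_\mathfrak m\in G_{\ad}(\Phi,R_\mathfrak m)$ up to a central automorphism; by Remark~\ref{Remark_torus} the central tail is recognized as a homomorphism into the centre and stripped off, so the essential task is to assemble the $g_\mathfrak m$ into one element.

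The main obstacle is exactly this assembly. Each local conjugator $g_\mathfrak m$ is determined only up to the centralizer of the image and up to the centre, so the local solutions need not agree on overlaps of the localizations; to glue them I would use the rigidity of the adjoint representation---that an element of $G_{\ad}$ is pinned down by how it conjugates the root subgroups $X_\alpha$---to read off a coherent family of matrix entries lying in $R$ itself. Verifying that the glued element really belongs to $G_{\ad}(\Phi,R)$, rather than merely to $G_{\ad}(\Phi,S)$ for an overring $S$, is precisely what makes the inner automorphism in the conclusion \emph{strictly} inner, and controlling this coherence simultaneously with the central ambiguity is the technical heart of the argument.
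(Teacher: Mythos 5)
You should first note what you are up against: the paper never proves Theorem~\ref{main_adjoint} at all --- it is imported verbatim from \cite{Bunina_main}, whose proof is indeed by the localization method of Petechuk, so your overall strategy belongs to the right family. But your proposal leaves its two decisive steps as declared ``technical cores,'' and the first of them is not merely unproven --- it is false as you state it. There is no way to manufacture, from $\phi\in\Aut(G_{\ad}(\Phi,R))$, automorphisms $\phi_{\mathfrak m}$ of $G_{\ad}(\Phi,R_{\mathfrak m})$ compatible with the localization homomorphisms $\lambda_{\mathfrak m}\colon G_{\ad}(\Phi,R)\to G_{\ad}(\Phi,R_{\mathfrak m})$. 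Take $R=k\times k$ and let $\phi$ be the (standard, ring) automorphism induced by swapping the two factors of $R$: then $\lambda_{\mathfrak m_1}\circ\phi=\lambda_{\mathfrak m_2}$, and since $\lambda_{\mathfrak m_1}$ and $\lambda_{\mathfrak m_2}$ have different kernels, no automorphism of $G_{\ad}(\Phi,R_{\mathfrak m_1})$ can close the square. In general an automorphism permutes the maximal spectrum, so before any local theorem can be invoked you must first produce a bijection $\sigma$ of the maximal ideals together with isomorphisms $G_{\ad}(\Phi,R_{\mathfrak m})\to G_{\ad}(\Phi,R_{\sigma(\mathfrak m)})$; this is where the classification of subgroups normalized by $E_{\ad}(\Phi,R)$ (the sandwich theorems of Abe--Suzuki, Taddei, Vaserstein \cite{v19}, \cite{v41}, \cite{v42}) has to enter. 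Your phrase ``$\phi$ respects the congruence structure'' names this issue but does none of the work, and even granted it, the congruence quotients record information over the quotients $R/I$, not over the localizations $R_{\mathfrak m}$, so a further limit or patching argument is still required before you are in the setting of the local theorems.

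Your descent is also ordered in a way that cannot be executed. You propose to glue the $\rho_{\mathfrak m}$ into one $\rho\in\Aut R$ and compose it away \emph{before} assembling the conjugators, but the scalar $\rho_{\mathfrak m}(t)$ is only visible inside $G_{\ad}(\Phi,R_{\mathfrak m})$ through the identity $\lambda_{\mathfrak m}(\phi(x_\alpha(t)))=g_{\mathfrak m}\,x_{\delta_{\mathfrak m}(\alpha)}(\pm\rho_{\mathfrak m}(t))\,g_{\mathfrak m}^{-1}$; without prior control of the family $(g_{\mathfrak m})$ you cannot conclude that the tuple $(\rho_{\mathfrak m}(t))_{\mathfrak m}\in\prod R_{\mathfrak m}$ lies in $R$, so the ring automorphism cannot be extracted first. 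The proof in \cite{Bunina_main} (and the same pattern in this paper's proof of Theorem~\ref{main_sc}) runs in the opposite order: one first removes the inner part by a single conjugation inside the group over $S=\prod_{\mathfrak m}R_{\mathfrak m}$ or an extension $\widetilde S$, then recognizes what remains as ring composed with graph directly from the relations (R1)--(R6), and only then proves that the conjugator normalizes $G_{\ad}(\Phi,R)$ and in fact lies in it. That last membership statement is exactly the ``strictly inner'' clause of the theorem, and it is obtained from generation results of the type of Lemmas~\ref{generate1} and~\ref{generate2} (the $x_\alpha(1)$ generate the Lie algebra over $R$; the Lie algebra together with $E$ generates $M_N(R)$) combined with $G_{\ad}(\Phi,\widetilde S)\cap\SL_N(R)=G_{\ad}(\Phi,R)$; none of this machinery appears in your proposal, and the rigidity heuristic you offer does not substitute for it. Finally, one point cuts in your favor but shows the plan was not thought through for the adjoint case specifically: $Z(G_{\ad}(\Phi,R))$ is trivial, so the ``central ambiguity'' you repeatedly guard against is vacuous, the local conjugator is unique, and the genuine difficulty is not coherence of choices but membership of the glued element in $G_{\ad}(\Phi,R)$.
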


Our goal is to prove the following theorem:

\begin{theorem}\label{main_sc}
Let $G=G_{\pi}(\Phi,R)$ 
be a  Chevalley group \emph{(}or its elementary subgroup $E_{\pi}(\Phi,R)))$ of rank $>1$,  $R$ be a commutative ring with~$1$. Suppose that for $\Phi = \mathbf A_2, \mathbf B_l, \mathbf C_l$ or $\mathbf F_4$ we have $1/2\in R$, for $\Phi=\mathbf G_2$ we have $1/2,1/3 \in R$. Then every automorphism of the group~$G$
is standard.
\end{theorem}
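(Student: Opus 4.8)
The plan is to derive Theorem~\ref{main_sc} from the adjoint case established in Theorem~\ref{main_adjoint} by pushing every automorphism down the central isogeny $\mathrm{ad}\colon G_\pi(\Phi,R)\to G_{\ad}(\Phi,R)$, $x_\alpha(t)\mapsto x_\alpha(t)$. Its kernel $K:=\ker(\mathrm{ad})$ is central and consists of those $h(\chi)\in T_\pi$ that centralize every $x_\beta(\xi)$, i.e., by~\eqref{e4} those with $\chi$ trivial on the root lattice $\Lambda_{ad}$; moreover $\mathrm{ad}$ maps $E_\pi(\Phi,R)$ onto $E_{\ad}(\Phi,R)$. Because the adjoint weight lattice is $\Lambda_{ad}$ itself, the adjoint group is centerless for rank $>1$, so Theorem~\ref{main_adjoint} yields decompositions with only ring, graph and strictly inner factors, and---crucially---it forces $\ker(E_\pi\to E_{\ad})$ to equal the full center $Z(E_\pi(\Phi,R))$ (a central extension of a centerless group by $K$ has center exactly $K$). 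In particular this kernel is characteristic.

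First I treat the elementary group (Section~3). Let $\sigma\in\Aut(E_\pi(\Phi,R))$. As $\sigma$ preserves the characteristic subgroup $Z(E_\pi)=\ker(\mathrm{ad}|_{E_\pi})$, it descends to $\bar\sigma\in\Aut(E_{\ad}(\Phi,R))$, which by Theorem~\ref{main_adjoint} factors as $\bar\sigma=\rho\,\delta\,i_{\bar g}$ with $\rho$ a ring automorphism, $\delta$ a graph automorphism, and $\bar g\in G_{\ad}(\Phi,R)$. The ring and graph factors lift verbatim to $E_\pi$: the same automorphism of $R$ and the same admissible permutation of $\Delta$ (with the signs $\varepsilon(\alpha)$) define $\tilde\rho,\tilde\delta\in\Aut(E_\pi)$ acting on the generators by identical formulas and intertwining with $\mathrm{ad}$. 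Replacing $\sigma$ by $(\tilde\rho\,\tilde\delta)^{-1}\sigma$, I reduce to the situation in which $\bar\sigma=i_{\bar g}$ is inner.

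The crux is to lift this inner factor. Lifting $\bar g$ along the isogeny is obstructed by a class valued in the kernel $K$ (whose character group is $\Lambda_\pi/\Lambda_{ad}$); this class is killed after a suitable extension $S\supseteq R$, over which I obtain $g\in G_\pi(\Phi,S)$ with $\mathrm{ad}(g)=\bar g$ that normalizes $G_\pi(\Phi,R)$, hence an inner---in general not strictly inner---automorphism $i_g$ of $E_\pi(\Phi,R)$ inducing $i_{\bar g}$ on $E_{\ad}$. This is exactly the step that prevents strict innerness and accounts for the weaker conclusion of Theorem~\ref{main_sc} compared with Theorem~\ref{main_adjoint}; it is the main obstacle of the proof. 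Replacing $\sigma$ by $i_g^{-1}\sigma$, I am left with an automorphism $\sigma'$ inducing the identity on $E_{\ad}$. Then for every $x$ the element $\tau(x):=\sigma'(x)x^{-1}$ lies in $Z(E_\pi)$, and centrality gives $\tau(xy)=\sigma'(x)\tau(y)x^{-1}=\tau(x)\tau(y)$, so $\tau\colon E_\pi(\Phi,R)\to Z(E_\pi)$ is a homomorphism and $\sigma'=\tau$ is a central automorphism. Unwinding the substitutions exhibits $\sigma=\tilde\rho\,\tilde\delta\,i_g\,\tau$ as standard.

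Finally I pass to the full group $G_\pi(\Phi,R)$ (Section~4). Since $\Phi$ has rank $>1$, $E_\pi(\Phi,R)$ is characteristic in $G_\pi(\Phi,R)$, so any $\sigma\in\Aut(G_\pi)$ restricts to an automorphism of $E_\pi$, which by the above is standard; composing with the inverses of its constituents I may assume $\sigma|_{E_\pi}=\mathrm{id}$. For any $g\in G_\pi$ and $x\in E_\pi$ one has $\sigma(gxg^{-1})=gxg^{-1}$ (normality) while also $\sigma(gxg^{-1})=\sigma(g)\,x\,\sigma(g)^{-1}$; hence $g^{-1}\sigma(g)$ centralizes $E_\pi$. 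Using $C_{G_\pi}(E_\pi)=Z(G_\pi)$---immediate in the semilocal case from $G_\pi=E_\pi T_\pi$, and deduced in general by the localization method of~\cite{Bunina_main}---I get $\sigma(g)=g\,z(g)$ with $z(g)\in Z(G_\pi)$, and a direct check shows $z\colon G_\pi\to Z(G_\pi)$ is a homomorphism. Thus $\sigma$ is the central automorphism $z$, and combining both sections proves that every automorphism of $G_\pi(\Phi,R)$ is standard.
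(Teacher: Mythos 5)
Your overall strategy coincides with the paper's: push the automorphism down to $E_{\ad}(\Phi,R)$, factor it by Theorem~\ref{main_adjoint}, lift the ring and graph factors, handle the inner factor over a ring extension, and finish the full group via $C_G(E)=Z(G)$. But the step you yourself call ``the main obstacle'' is precisely where you offer no proof, and it is where the paper does almost all of its work. You assert that after a suitable extension $S\supseteq R$ there exists $g\in G_\pi(\Phi,S)$ with $\mathrm{ad}(g)=\bar g$ \emph{which normalizes} $G_\pi(\Phi,R)$. Two separate things are missing here. First, the construction of the conjugating element: the paper embeds $R$ into $\prod_{\mathfrak m}R_{\mathfrak m}$, uses the decomposition $G_{\ad}(\Phi,R_i)=T_{\ad}(\Phi,R_i)E_{\ad}(\Phi,R_i)$ over each local factor, replaces each torus part $\bar t_i$ by an element $\bar h_i\in H_{\ad}(\Phi,S_i)$, $S_i\cong R_i[y]/(y^k-\lambda)$, having the same conjugation action, and thereby produces an \emph{elementary} element $\bar y\in E_{\ad}(\Phi,\widetilde S)$ with the same action as $\bar g$ --- not a lift of $\bar g$ itself, whose existence along the isogeny is exactly what is in doubt --- and elementary elements do lift to $y\in E_\pi(\Phi,\widetilde S)$. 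Second, and more seriously, normalization cannot simply be postulated: the paper proves that $y$ normalizes $E_\pi(\Phi,R)$ only \emph{a posteriori}, through the commutator computation showing $z_{\alpha,s}=1$ (every root embeds in an $\mathbf A_2$ or, using $1/2\in R$, a $\mathbf B_2$ subsystem, so each $x_\alpha(s)$ is a product of commutators on which central factors cancel), i.e.\ $\varphi_2=i_{y^{-1}}\circ\varphi_1$ is the identity on $E_\pi(\Phi,R)$; and it proves that $y$ normalizes the full group $G_\pi(\Phi,R)$ via Lemmas~\ref{generate1} and~\ref{generate2} (the $x_\alpha(1)$ generate the Lie algebra $\pi({\mathcal L}_R(\Phi))$, which together with $E$ generates $M_N(R)$, whence $yG_\pi(\Phi,R)y^{-1}\subseteq\SL_N(R)\cap G_\pi(\Phi,\widetilde S)=G_\pi(\Phi,R)$). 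This content, the whole of the paper's Section~4 and the key computation of Section~3, is compressed in your proposal into a single unproven sentence.

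A secondary gap follows from the first: because you only conclude that $\sigma'$ is a \emph{central} automorphism of $E_\pi(\Phi,R)$ (rather than the identity, which is what the paper proves), your final reduction for $G_\pi(\Phi,R)$ --- ``composing with the inverses of its constituents I may assume $\sigma|_{E_\pi}=\mathrm{id}$'' --- silently requires the central factor $\tau\colon E_\pi\to Z(E_\pi)$ to extend to, or be the restriction of, an automorphism of the whole group $G_\pi(\Phi,R)$, which you do not show. The paper sidesteps this entirely: its commutator computation kills the central factor on the elementary group (equivalently, under the stated hypotheses $E_\pi(\Phi,R)$ is perfect, so any homomorphism into an abelian group is trivial), so the restriction to $E_\pi$ is literally the identity and the Abe--Hurley identity $C_G(E)=Z(G)$ then applies cleanly to produce the central automorphism of $G$.
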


\section{Proof of the main theorem for elementary Chevalley groups and subgroups}\leavevmode

\subsection{Localization of rings and modules; injection of a ring into the product of its localizations.}\leavevmode

\begin{definition}  Let $R$ be a commutative ring. A subset $Y\subset R$ is called \emph{multiplicatively closed} in~$R$, if $1\in Y$ and $Y$ is closed under multiplication.
\end{definition}

Introduce  an equivalence relation $\sim$ on the set of pairs $R\times Y$ as follows:
$$
\frac{a}{s}\sim \frac{b}{t} \Longleftrightarrow \exists u\in Y:\ (at-bs)u=0.
$$
  By $\frac{a}{s}$ we denote the whole equivalence class of the pair $(a,s)$, by $Y^{-1}R$ we denote the set of all equivalence classes. On the set $S^{-1}R$ we can introduce the ring structure by
$$
\frac{a}{s}+\frac{b}{t}=\frac{at+bs}{st},\quad \frac{a}{s}\cdot \frac{b}{t}=\frac{ab}{st}.
$$

\begin{definition}
The ring $Y^{-1}R$ is called the \emph{ring of fractions of~$R$ with respect to~$Y$}.
\end{definition}

 Let $\mathfrak p$ be a prime ideal of~$R$. Then the set $Y=R\setminus {\mathfrak p}$ is multiplicatively closed (it is equivalent to the definition of the prime ideal). We will denote the ring of fractions  $Y^{-1}R$ in this case by $R_{\mathfrak p}$. The elements $\frac{a}{s}$, $a\in \mathfrak p$, form an ideal $\mathfrak M$ in~$R_{\mathfrak p}$. If $\frac{b}{t}\notin \mathfrak M$, then $b\in Y$, therefore $\frac{b}{t}$ is invertible in~$R_{\mathfrak p}$. Consequently the ideal $\mathfrak M$ consists of all non-invertible elements of the ring~$R_{\mathfrak p}$, i.\,e., $\mathfrak M$ is the greatest ideal of this ring, so $R_{\mathfrak p}$ is a local ring.

The process of passing from~$R$ to~$R_{\mathfrak p}$ is called  \emph{localization at~${\mathfrak p}$.}

\begin{proposition}\label{inlocal}
Every commutative ring  $R$ with $1$ can be naturally embedded in the cartesian product of all its localizations  by maximal ideals
 $$
S=\prod\limits_{{\mathfrak m}\text{ is a maximal ideal of }R} R_{\mathfrak m}
$$
by diagonal mapping, which corresponds every $a\in R$ to the element
$
\prod\limits_{\mathfrak m} \left( \frac{a}{1}\right)_{\mathfrak m} \in S.
$
\end{proposition}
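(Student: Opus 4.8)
The plan is to verify that the diagonal map is a ring homomorphism and then concentrate all the real work on injectivity. That the map respects addition and multiplication is immediate: for each maximal ideal $\mathfrak m$ the canonical localization map $R\to R_{\mathfrak m}$, $a\mapsto (a/1)_{\mathfrak m}$, is a ring homomorphism (this follows at once from the formulas $\frac{a}{s}+\frac{b}{t}=\frac{at+bs}{st}$ and $\frac{a}{s}\cdot\frac{b}{t}=\frac{ab}{st}$ defining the operations on $Y^{-1}R$ with $Y=R\setminus\mathfrak m$), and a map into a cartesian product each of whose components is a homomorphism is itself a homomorphism. Moreover $1\in R$ maps to the element whose every component is the unit, so the map is unital. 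Hence the substance of the proposition is the statement that the diagonal map has trivial kernel.

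To prove injectivity, I would suppose $a\in R$ lies in the kernel, i.e. $(a/1)_{\mathfrak m}=0$ in $R_{\mathfrak m}$ for every maximal ideal $\mathfrak m$, and track the annihilator $\Ann(a)=\{r\in R\mid ra=0\}$, which is an ideal of $R$. Unwinding the equivalence relation $\sim$ defining $R_{\mathfrak m}$, the equality $\frac{a}{1}=\frac{0}{1}$ means precisely that there exists $u\in R\setminus\mathfrak m$ with $ua=0$, that is, $u\in\Ann(a)$ and $u\notin\mathfrak m$. Thus for \emph{every} maximal ideal $\mathfrak m$ we obtain $\Ann(a)\not\subseteq\mathfrak m$.

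It then remains to invoke the standard fact that every proper ideal of a commutative ring with $1$ is contained in some maximal ideal (proved by Zorn's lemma applied to the poset of proper ideals containing it). Since $\Ann(a)$ is contained in no maximal ideal, it cannot be proper, so $\Ann(a)=R$; in particular $1\in\Ann(a)$, whence $a=1\cdot a=0$. This forces the kernel to be trivial and establishes the embedding. As for the main obstacle, there is essentially no computational difficulty here; the only genuine point is the choice of the right invariant, namely the annihilator ideal $\Ann(a)$, together with the appeal to the existence of a maximal ideal above any proper ideal, which tacitly uses the axiom of choice. Once these are in place the argument is routine.
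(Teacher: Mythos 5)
Your proof is correct, and it is the canonical argument: the paper itself states this proposition without any proof, treating it as a standard fact of commutative algebra (cf.\ the textbook of Atiyah--Macdonald cited there), so there is no competing argument to compare against. Your reduction to injectivity and the use of the annihilator ideal $\{r\in R\mid ra=0\}$ together with the fact that a proper ideal lies in some maximal ideal is exactly the standard ``being zero is a local property'' proof, correctly unwinding the equivalence relation $\frac{a}{1}\sim\frac{0}{1}$ from the paper's definition.
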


\subsection{Proof for $E_{\pi}(\Phi,R)$.}\leavevmode

Suppose that $G=G_{\pi}(\Phi,R)$ or $G=E_{\pi}(\Phi,R)$ is a Chevalley group (or its elementary subgroup), where $\Phi$ is an indecomposable root system of rank $>1$, $R$ is an arbitrary commutative ring (with $1/2$ in the case $\Phi =\mathbf A_2, \mathbf F_4, \mathbf B_l, \mathbf C_l$ and with $1/2$ and $1/3$ in the case $\Phi=\mathbf G_2$). Suppose that $\varphi\in \Aut (G)$.

Since the subgroup $E_{\pi}(\Phi,R)$ is characteristic in $G_{\pi}(\Phi,R)$, then  $\varphi$ induces the automorphism $\varphi \in \Aut (E_{\pi}(\Phi,R))$ (we denote it by the same letter). 

The elementary adjoint Chevalley group $E_{\ad}(\Phi, R)$ is the quotient group of our initial elementary Chevalley group $E_{\pi}(\Phi, R)$ by its center~$Z=Z(E_\pi(\Phi,R))$. Therefore the automorphism $\varphi$ induces an automorphism $\overline \varphi$ of the adjoint Chevalley group $E_{\ad}(\Phi, R)$. By Theorem~\ref{main_adjoint} $\varphi$ is the composition of a graph automorphism $\overline \Lambda_{\varepsilon_1,\dots, \varepsilon_k}$, where $\varepsilon_1,\dots, \varepsilon_k\in R$, a ring automorphism $\overline \rho$, induced by $\rho\in \Aut R$, and the strictly inner automorphism $i_{\overline g}$, induced by some $\overline g\in G_{\ad}(\Phi,R)$. Central automorphism is identical in the decomposition of~$\overline \varphi$, since the center of any adjoint Chevalley group is trivial.

Since $\varepsilon_1,\dots, \varepsilon_k\in R$ and for any $\delta_i\in \Aut \Delta$ and for any representation~$\pi$ of the corresponding Lie algebra there exists the corresponding graph automorphism $\delta_i\in \Aut (G_\pi (\Phi, R))$, then there exists a graph automorphism $\Lambda_{\varepsilon_1,\dots, \varepsilon_k}\in \Aut (E_{\pi}(\Phi,R))$ such that the induced automorphism of the group $E_{\ad}(\Phi,R)$ is precisely~$\overline \Lambda_{\varepsilon_1,\dots, \varepsilon_k}$. 

Also taking the ring automorphism $\rho\in \Aut (G_{\pi}(\Phi,R))$ we see that the induced automorphism of $E_{\ad}(\Phi,R)$ is precisely~$\overline \rho$.

Therefore if we take $\varphi_1=  \Lambda^{-1}\circ \rho^{-1}\circ \varphi$, then we obtain an automorphism of the group $G$ (and in any cases of the group/subgroup $E_{\pi}(\Phi,R)$) which induces the strictly inner automorphism $i_{\overline g}$ on $E_{\ad}(\Phi,R)$. 

We always assume that $R$ is a subring of the ring $S=\prod\limits_{{\mathfrak m}} R_{\mathfrak m}=\prod\limits_{i\in \varkappa} R_i$, where every $R_i$ is a local ring, therefore 
$$
G_{\pi}(\Phi, R)\subseteq G_{\pi} (\Phi,S)=\prod_{i\in \varkappa} G_{\pi}(\Phi, R_i)\text{ and }E_{\pi}(\Phi, R)\subseteq \prod_{i\in \varkappa} E_{\pi}(\Phi, R_i).
$$
Note that since every $R_i$ is local, then  we have $G_\pi (\Phi,R)=T_\pi(\Phi,R) E_\pi (\Phi,R)$ and therefore
$$
\prod\limits_{i\in \varkappa} G_{\pi}(\Phi,R_i)=\prod\limits_{i\in \varkappa} T_\pi(\Phi,R_i)E_{\pi}(\Phi,R_i).
$$

Suppose now that $\overline g = \prod\limits_{i\in \varkappa} \overline g_i$, where $\overline g_i\in G_{\ad}(\Phi,R_i)$.

Let us consider one $i\in \varkappa$, where $\overline g_i \in T_{\ad}(\Phi, R_i) E_{\ad}(\Phi,R_i)$, i.\,e., $g_i=\overline t_i\cdot \overline x_i$, where $\overline t_i \in T_{\ad}(\Phi,R_i)$, $\overline x_i \in E_{\ad} (\Phi,R_i)$.

Since $\overline x_i$ is a product of elementary unipotents over the ring~$R_i$, then we can take $x_i\in E_\pi(\Phi,R_i)$, that is the same product of the same elementary unipotents and its image under factorization of $E_\pi (\Phi,R_i)$ by its center is precisely~$\overline x_i$.

Now let us consider the element $\overline t_i\in T_{\ad}(\Phi,R_i)$. This element  corresponds to some homomorphism $\chi_i \in \Hom (\Lambda (\ad), R_i^*)$ and acts on any $x_\alpha(s)\in E_{\ad}(\Phi, R_i)$ as
$$
\overline t_i x_\alpha(s) \overline t_i^{-1}=x_\alpha(\chi_i(\alpha)\cdot s).
$$
If $\overline t_i \notin H_{\ad}(\Phi,R_i)$, then we can extend the ring $R_i$ up to a ring $S_i$ so that there exists $\overline h_i\in H_{\ad}(\Phi,S_i)$ with the same action on all elementary uniponents $x_\alpha (s)$ as our~$\overline t_i$. The ring $S_i$ is an algebraic extension of~$R_i$, in which there exist several new roots $\sqrt[k]{\lambda}$ for a finite number of $\lambda \in R_i^*$. This $S_i$ can be obtained from~$R_i$ by the standard procedure 
$$
S_i\cong R_i[y]/(y^k-\lambda).
$$
Note that $S_i$ is not necessarily local.

Now since $R_i\subseteq S_i$, then $S\subseteq \prod\limits_{i\in \varkappa} S_i=\widetilde S$ and $R\subseteq S\subseteq \widetilde S$. We see that for every $i\in \varkappa$ the torus element
$\overline t_i$ acts on all $x_\alpha(s)$, $s\in S_i$ as $\overline h_i \in H_{\ad} (\Phi, S_i)$, therefore the element $\overline y_i=\overline h_i\cdot \overline x_i$ acts on all $x_\alpha(s)$, $s\in S_i$ as the initial~$\overline g_i$. 

Consequently the element $\overline y:= \prod\limits_{i\in \varkappa} \overline y_i \in E_{\ad}(\Phi, \widetilde S)$ acts on all $x_\alpha(s)$, $s\in \widetilde S$ as the initial~$\overline g$. 

Therefore we have $\overline y \in E_{\ad}(\Phi,\widetilde S)$ such that 
$$
i_{\overline y}\vert_{E_{\ad}(\Phi,\widetilde S)}=i_{\overline g}\vert_{E_{\ad}(\Phi,\widetilde S)}.
$$
In particular,
$$
i_{\overline y}\vert_{E_{\ad}(\Phi,R)}=i_{\overline g}\vert_{E_{\ad}(\Phi,R)}.
$$
Let us take $y\in E_{\pi}(\Phi,\widetilde S)$ such that its image under factorization of $E_{\pi}(\Phi,\widetilde S)$ by its center is precisely~$\overline y$. 

Now we can take $\varphi_2=i_{ y^{-1}}\circ \varphi_1$, it will be an isomorphism between $E_\pi (\Phi, R)$ and the subgroup of $E_\pi (\Phi,\widetilde S)$ such that under factorization by the center of $E_\pi (\Phi, \widetilde S)$ we obtain the identical automorphism $\overline \varphi_2$ of the group $E_{\ad} (\Phi,R)$.

Now let us analyze the mapping $\varphi_2$. 

Since $\overline \varphi_2$ is identical, then 
$$
\forall \alpha\in \Phi \ \forall s\in R\quad \varphi_2(x_\alpha(s))=z_{\alpha,s} x_\alpha(s),\text{ where }z_{\alpha,s}\in Z(E_\pi(\Phi,\widetilde S)).
$$
If $\alpha$ is either any root of the systems $\mathbf A_l$, $l\geqslant 2$, $\mathbf D_l$, $l\geqslant 4$, $\mathbf E_l$, $l=6,7,8$, $\mathbf F_4$, or any long root of the systems $\mathbf G_2$, $\mathbf B_l$, $l\geqslant 3$,  or any short root of the systems $\mathbf C_l$, $l\geqslant 3$, then $\alpha$ can be represented as $\alpha=\beta+\gamma$, where $\{ \pm \beta, \pm \gamma, \pm \alpha\} \cong \mathbf A_2$. In this case 
$$
x_\alpha (s)=[x_\beta(s),x_\gamma(1)],
$$
therefore
$$
z_{\alpha,s} x_\alpha(s)=\varphi_2(x_\alpha(s))=[\varphi_2(x_\beta(s)),\varphi_2(x_\gamma(1))]=[z_{\beta,s}x_\beta(s),z_{\gamma,1}x_\gamma(1)]=[x_\beta(s),x_\gamma(1)]=x_\alpha(s).
$$
Consequently, $z_{\alpha,s}=1$ for all $s\in R$.

For the root system $\mathbf G_2$ all Chevalley groups are adjoint and so we do not need to prove Theorem~1 for this root system.

For the root system $\mathbf B_2$ if $\alpha$ is a long simple root and $\beta$ is a short simple root, then $\Phi^+=\{ \alpha, \beta,\alpha+\beta,\alpha+2\beta\}$, where $\alpha+\beta$ is short and $\alpha+2\beta$ is long and
\begin{align*}
[x_\alpha(t), x_\beta(u)]&=x_{\alpha+\beta}(\pm tu)x_{\alpha+2\beta}(\pm tu^2),\\
[x_{\alpha+\beta}(t),x_\beta(u)]&=x_{\alpha+2\beta}(\pm 2tu).
\end{align*}
(see \cite{Steinberg}, Lemma~33). 

Since for the root system $\mathbf B_2$ we require $1/2\in R$, then 
$$
[x_{\alpha+\beta}(s),x_\beta(1/2)]=x_{\alpha+2\beta}(\pm s)
$$
and by the same arguments as above $z_{\gamma,s}=1$ for all long roots $\gamma$ and all $s\in R$.
Then
\begin{multline*}
x_{\alpha +\beta}(\pm s)x_{\alpha+2\beta}(\pm s)=[x_\alpha (s), x_\beta(1)]=\varphi_2([x_\alpha (s), x_\beta(1)])=\\
=\varphi_2(x_{\alpha +\beta}(\pm s)x_{\alpha+2\beta}(\pm s))=z_{\alpha+\beta,\pm s}x_{\alpha +\beta}(\pm s)x_{\alpha+2\beta}(\pm s),
\end{multline*}
thus $z_{\gamma,s}=1$ also for all short roots $\gamma\in \mathbf B_2$. Therefore for $\mathbf B_2$ for all $\alpha\in \Phi$ and all $s\in R$ the mapping $\varphi_2$ is an identical automorphism of $E_\pi (\Phi,R)$.

Since any root $\gamma$ of the root system $\mathbf B_l$ or $\mathbf C_l$, $l\geqslant 3$, can be embedded to some root system isomorphic to~$\mathbf B_2$, and in this case we also require $1/2\in R$, then for these root systems also $z_{\gamma,s}=1$ for all $s\in R^*$ and $\varphi_2$ is an identical automorphism of $E_\pi (\Phi,R)$.

Therefore for all cases under consideration
$$
\varphi_2\vert_{E_\pi(\Phi, R)}=i_{y^{-1}}\circ \Lambda^{-1} \circ \rho^{-1} \circ \varphi \vert_{E_\pi(\Phi, R)}=id_{E_\pi(\phi,R)},
$$
so
$$
\varphi \vert_{E_\pi(\Phi, R)}=\rho \circ \Lambda \circ i_y \vert_{E_\pi(\Phi, R)},
$$
where $y\in E_\pi (\Phi,\widetilde S)\cap N (E_\pi (\Phi,R))$, $\Lambda$ is a graph automorphism of the groups $G_\pi(\Phi,R)$ and $E_\pi (\Phi,R)$ and $\rho$ is a ring automorphism of the groups $G_\pi(\Phi,R)$ and $E_\pi (\Phi,R)$.

Thus, for $G=E_\pi (\Phi,R)$ the main theorem (Theorem~2) is proved.

\section{Proof of the main theorem for the groups $G_\pi(\varphi,R)$}

Let now $G=G_\pi (\Phi,R)$. Initially the mapping $\varphi$ was an automorphism of the group~$G$. The mapping $\varphi_1$ from the previous section was the composition of $\varphi$ and graph and ring automorphisms of the group~$G$, i.\,e., also an automorphism of~$G$. After that $\varphi_2$ (from the previous section)  is the composition of $\varphi_1$ and the conjugation of~$G$ by some element $y\in E_\pi(\Phi,\widetilde S)$, where $R\subset \widetilde S$. We know that $y$ normalizes~$E_\pi(\Phi,R)$ and we want to show that in our case $y$ normalizes also our full Chevalley group~$G$.

Note that for the simply-connected Chevalley group of the type $\mathbf E_6$ Luzgarev and Vavilov in~\cite{Normalizer_E6} proved that the normalizers of the Chevalley group and its elementary subgroup coincide. Then in~\cite{Normalizer_E7} they proved the same theorem for the root system~$\mathbf E_7$. Since all other exceptional Chevalley groups are adjoint, we only need to show the coincidence of normalizers for non-adjoint classical Chevalley groups, but our method will cover all the cases.

\begin{lemma}\label{generate1}
Under assumptions of Theorem~\ref{main_sc} the elements $x_\alpha(1)$, $\alpha\in \Phi$, by addition, multiplication and multiplication by elements from~$R$
generate the Lie algebra $\pi({\mathcal L}_R(\Phi))\subset M_N(R)$,  where $N$ is the dimension of the representation~$\pi$.
\end{lemma}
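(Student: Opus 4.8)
Let $\mathcal A \subseteq M_N(R)$ denote the smallest $R$-submodule containing all the unipotents $x_\alpha(1)$, $\alpha \in \Phi$, and closed under matrix multiplication; this is precisely the set of matrices obtainable from the $x_\alpha(1)$ by the three operations in the statement. The plan is to prove the sharper fact that every Chevalley basis element lies in $\mathcal A$, i.e. $\pi(x_\alpha) \in \mathcal A$ for all $\alpha \in \Phi$ and $\pi(h_\alpha) \in \mathcal A$ for $\alpha \in \Delta$; since $\pi(\mathcal{L}_R(\Phi))$ is by definition the $R$-span of these matrices and $\mathcal A$ is an $R$-module, this yields $\pi(\mathcal{L}_R(\Phi)) \subseteq \mathcal A$. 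Because $\mathcal A$ is closed under the commutator $[a,b]=ab-ba$ and $[\pi(x_\alpha),\pi(x_{-\alpha})]=\pi(h_\alpha)$ in the Chevalley basis, it suffices to show $\pi(x_\alpha)\in\mathcal A$ for every root $\alpha$: the Cartan generators $\pi(h_\alpha)$ then come for free from brackets, and the $h_\alpha$, $\alpha\in\Delta$, span the Cartan part of $\mathcal{L}_R(\Phi)$ over $R$.

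The core of the argument is to isolate the degree-one part $\pi(x_\alpha)$ inside $x_\alpha(1)=1+\pi(x_\alpha)+\pi(x_\alpha)^2/2+\dots$. First I would record that, by (R1), the integer powers $x_\alpha(1)^m=x_\alpha(m)$ lie in $\mathcal A$ and satisfy $x_\alpha(m)=\sum_{k\ge 0} m^k\,\bigl(\pi(x_\alpha)^k/k!\bigr)$, a polynomial in $m$ whose matrix coefficients are the integral divided powers $B_k:=\pi(x_\alpha)^k/k!$. Taking finite differences of the family $\{x_\alpha(m)\}_{m\ge 1}$ (equivalently, inverting the Vandermonde matrix with nodes $1,2,\dots$) expresses $\pi(x_\alpha)=B_1$ as an $R$-linear combination of the $x_\alpha(m)$, placing it in $\mathcal A$, provided the denominators that appear are invertible in $R$. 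When $\pi(x_\alpha)^2=0$ this is entirely integral: $\pi(x_\alpha)=x_\alpha(1)^2-x_\alpha(1)$, with no division at all.

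For roots of higher local nilpotency I would pass to the previous case through brackets, exactly as in the elementary computation above. If $\alpha=\beta+\gamma$ with $\{\pm\beta,\pm\gamma,\pm\alpha\}\cong\mathbf A_2$, then $\pi(x_\alpha)=\pm[\pi(x_\beta),\pi(x_\gamma)]$, so $\pi(x_\alpha)\in\mathcal A$ once $\pi(x_\beta),\pi(x_\gamma)\in\mathcal A$; iterating on the height of the roots reduces the problem to a small spanning set of low roots handled by the direct extraction above. The roots not reachable this way (short roots of $\mathbf B_l$, long roots of $\mathbf C_l$, and the roots of $\mathbf B_2$) are treated through the structure constants $[x_{\alpha+\beta}(t),x_\beta(u)]=x_{\alpha+2\beta}(\pm 2tu)$ and their $\mathbf G_2$ analogues, whose coefficients $\pm 2,\pm 3$ force the use of $1/2$ and, for $\mathbf G_2$, of $1/3$. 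This is exactly the point where the hypotheses of Theorem~\ref{main_sc} are consumed.

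The main obstacle is precisely this extraction step: separating $\pi(x_\alpha)$ from the higher divided powers $\pi(x_\alpha)^k/k!$ bundled into $x_\alpha(1)$. Over a general commutative ring the finite-difference coefficients are not integral, and the assumptions $1/2\in R$ (and $1/3\in R$ for $\mathbf G_2$) are exactly what is needed to invert the small Vandermonde denominators; in the simply-laced cases, where no such assumption is available, the extraction must be arranged integrally — either because the relevant root elements act with square zero, or by realizing $\pi(x_\alpha)$ as a commutator of matrices already shown to lie in $\mathcal A$. Once all $\pi(x_\alpha)$, and hence all $\pi(h_\alpha)=[\pi(x_\alpha),\pi(x_{-\alpha})]$, are in $\mathcal A$, their $R$-span $\pi(\mathcal{L}_R(\Phi))$ is contained in $\mathcal A$, which is the assertion of the lemma.
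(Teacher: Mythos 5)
Your extraction machinery is sound wherever enough integers are invertible, and there it essentially reproduces the paper's proof: for $\pi(X_\alpha)^3=0$ and $1/2\in R$ the paper writes the closed formula $\pi(X_\alpha)=(x_\alpha(1)-E)-(x_\alpha(1)-E)^2/2$, which is exactly your Vandermonde inversion with nodes $1,2,3$; in the square-zero case both arguments use the integral identity $\pi(X_\alpha)=x_\alpha(1)^2-x_\alpha(1)$. The gap is the remaining case, which the lemma must cover because Theorem~\ref{main_sc} imposes no invertibility hypothesis for the simply-laced systems: the \emph{adjoint} representations of $\mathbf A_l$ ($l\geqslant 3$), $\mathbf D_l$, $\mathbf E_l$ over rings in which $2$ is not invertible. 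There both of your fallback mechanisms fail. First, adjoint root elements do not act with square zero: $(\ad X_\alpha)^2X_{-\alpha}=[X_\alpha,[X_\alpha,X_{-\alpha}]]=[X_\alpha,H_\alpha]=-2X_\alpha\neq 0$ as an integral matrix, so the nilpotency degree is $3$ and the degree-one part of $x_\alpha(1)=E+\ad X_\alpha+(\ad X_\alpha)^2/2$ cannot be isolated by finite differences without $1/2$. (Note also that vanishing of $\pi(X_\alpha)^2$ \emph{in} $M_N(R)$ is not the relevant condition: the divided power $\pi(X_\alpha)^2/2$ is computed integrally and then reduced, so over a ring of characteristic $2$ one has $(\ad X_\alpha)^2=0$ in $M_N(R)$ and yet $x_\alpha(1)\neq E+\ad X_\alpha$, and your identity $x_\alpha(1)^2-x_\alpha(1)=\pi(X_\alpha)$ breaks.) Second, the commutator reduction $\pi(X_{\beta+\gamma})=\pm[\pi(X_\beta),\pi(X_\gamma)]$ is circular here: in a simply-laced system all roots are Weyl-conjugate, so every $\ad X_\beta$ suffers the same obstruction and there is no base case of roots ``already shown to lie in $\mathcal A$''. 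This adjoint-without-$1/2$ case is genuinely hard; the paper does not reprove it but quotes it from \cite{Bunina_main} (resting on the earlier local-ring papers such as \cite{without2}), where it is a separate, nontrivial integral computation rather than a corollary of the two tricks you list.

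Two smaller points. Your treatment of $\mathbf B_l$, $\mathbf C_l$, $\mathbf B_2$ via the group commutator formula $[x_{\alpha+\beta}(t),x_\beta(u)]=x_{\alpha+2\beta}(\pm 2tu)$ is misdirected: commutators of group elements do not produce Lie algebra elements, and no such detour is needed, since for exactly these systems the theorem assumes $1/2\in R$ and all root elements have nilpotency degree at most $3$ in the relevant representations, so your direct extraction already applies to every root. Similarly $\mathbf G_2$ needs no special handling: it is always adjoint, and the paper simply cites \cite{Bunina_main}; your Vandermonde argument with nodes $1,\dots,4$ and determinant $12$ would also work there given $1/2,1/3\in R$. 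These are fixable misstatements; the missing adjoint simply-laced case without $1/2$ is the real gap.
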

\begin{proof}
For the adjoint Chevalley groups this lemma was proved in~\cite{Bunina_main}.
Therefore we will not repeat the proof for the root system~$\mathbf G_2$ (since it is always adjoint).

If the root system differs from $\mathbf G_2$ and $1/2\in R$, then $x_\alpha (1)=E+\pi(X_\alpha)  + \pi(X_\alpha)^2/2$, therefore
$$
\pi(X_\alpha)=x_\alpha(1)-E-(x_\alpha(1)-E)^2/2,
$$
and 
$$
\pi({\mathcal L}_R(\Phi))=\langle \pi(X_\alpha)\mid \alpha\in \Phi\rangle_R.
$$

Suppose now that we deal with systems $\mathbf A_l$, ($l\geqslant 3$),  $\mathbf D_l, \mathbf E_l$, $1/2\notin R$.

For all these systems and non-adjoint representations~$\pi$ we have $\pi (X_\alpha)^2=0$ for all $\alpha\in \Phi$, therefore 
$$
\pi(X_\alpha)=x_\alpha(1)-E.
$$

The lemma is proved.

\end{proof}

  From Lemma \ref{generate1} we see that the conjugation by~$y$ maps the Lie algebra $\pi(\mathcal L(\Phi)_R)$ onto itself.

\begin{lemma}\label{generate2}
Under assumptions of Theorem~\ref{main_sc} the Lie algebra $\pi({\mathcal L}_R(\Phi))$ together with the unity matrix~$E$  by addition, multiplication and multiplication by elements from~$R$
generate the matrix ring $M_N(R)$, where $N$ is the dimension of the representation~$\pi$.
\end{lemma}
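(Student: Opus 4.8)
The plan is to reduce the statement to a computation over a field, where the irreducibility of~$\pi$ does the work, and then lift back by Nakayama. Write $A\subseteq M_N(R)$ for the unital associative $R$-subalgebra generated by the matrices $\pi(X_\alpha)$, $\alpha\in\Phi$, and by~$E$. First I would localize: the quotient $M_N(R)/A$ is a finitely generated $R$-module, so it vanishes as soon as $A_{\mathfrak m}=M_N(R_{\mathfrak m})$ for every maximal ideal~$\mathfrak m$; and since forming the subalgebra generated by finitely many elements commutes with localization, $A_{\mathfrak m}$ is exactly the algebra generated over~$R_{\mathfrak m}$ by the same matrices. Thus it suffices to treat a local ring~$R$ with maximal ideal~$\mathfrak m$ and residue field $k=R/\mathfrak m$, and the hypotheses $1/2\in R$ (resp. $1/2,1/3\in R$) force $\charr k\ne 2$ (resp. $\charr k\ne 2,3$) in the relevant types.

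Over a local ring I would finish by Nakayama's lemma. The matrices $\pi(X_\alpha)$ and~$E$ are defined integrally through the Chevalley basis, hence reduce to generators $\bar\pi(X_\alpha),\bar E$ of the image $\bar A\subseteq M_N(k)$ of~$A$. If I can show $\bar A=M_N(k)$, then $A+\mathfrak m\,M_N(R)=M_N(R)$, and since $M_N(R)$ is a finitely generated $R$-module, Nakayama yields $A=M_N(R)$. So everything comes down to the field statement: the associative algebra generated by $\bar\pi(\mathcal L)$ together with~$E$ is all of $\End_k(V)=M_N(k)$.

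For the field statement I would use that, in the admissible characteristics, the defining representation of the classical non-adjoint types under consideration stays absolutely irreducible after reduction to~$k$; then Burnside's theorem (equivalently Jacobson density) gives that the associative algebra generated by an absolutely irreducible system of operators is the full matrix algebra, while adjoining~$E$ merely guarantees that the algebra is unital and contains the scalars (which need not lie in the span of the products of the $\bar\pi(X_\alpha)$ in bad characteristic). More transparently for the role of $2$ and~$3$, one can build the matrix units by hand: from the weight decomposition $V=\bigoplus_\mu V_\mu$ one forms the diagonal projections $e_{\mu\mu}$ as polynomials in the commuting operators $[\bar\pi(X_\alpha),\bar\pi(X_{-\alpha})]=\bar\pi(h_\alpha)$ --- possible because distinct weights are separated by the integer pairings $\langle\mu,\alpha\rangle$, whose pairwise differences fail to be units only in characteristics $2$ or~$3$ for the multiply-laced types --- and then extracts $e_{\mu\nu}$, up to an invertible structure constant, from $e_{\mu\mu}\,\bar\pi(X_{\mu-\nu})\,e_{\nu\nu}$ whenever $\mu-\nu$ is a root; connectivity of the weight diagram, that is, irreducibility of~$\pi$, propagates these along chains to give all matrix units, hence $M_N(k)$.

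The main obstacle is precisely irreducibility together with the small primes. In characteristic~$2$ the vector representations of $\mathbf B_l,\mathbf C_l,\mathbf F_4$, and in characteristic~$3$ the small representations of~$\mathbf G_2$, degenerate, so the conclusion genuinely fails there and the assumptions $1/2,1/3\in R$ are essential; they enter through both the separation of weights and the invertibility of the structure constants attached to the multiple bonds. Two further points require care: the non-minuscule zero weight of the $\mathbf B_l$ vector representation needs a small extra argument on the corresponding weight space, and the simply connected group of type~$\mathbf D_l$ with $l$ even admits no faithful irreducible representation, so there $\pi$ splits into its two half-spin blocks. In that single case the construction yields the product of the two full matrix blocks rather than all of~$M_N(R)$, which is nonetheless precisely the conjugation-stable subalgebra needed for the ensuing normalizer argument.
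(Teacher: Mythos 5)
Your outer frame---localizing, then applying Nakayama over each $R_{\mathfrak m}$ to reduce to the residue field---is sound, and it is a genuine departure from the paper, whose proof never leaves $R$: the paper generates the matrix units integrally, with no reduction to fields at all. But the residue-field case is the entire substance of the lemma, and that is where your argument breaks. Your ``transparent'' construction of the diagonal idempotents $e_{\mu\mu}$ as polynomials in the $\bar\pi(h_\alpha)$ needs, for every pair of distinct weights $\mu\ne\nu$, some root $\alpha$ with $\langle\mu,\alpha\rangle\not\equiv\langle\nu,\alpha\rangle$ in $k$, and you claim this can only fail ``in characteristics $2$ or $3$ for the multiply-laced types.'' That is exactly backwards. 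The multiply-laced types in those characteristics are excluded by hypothesis ($1/2\in R$ for $\mathbf A_2,\mathbf B_l,\mathbf C_l,\mathbf F_4$, and $1/2,1/3\in R$ for $\mathbf G_2$), so they never reach your field step; what does reach it is $\charr k=2$ with $\Phi=\mathbf A_l\ (l\geqslant 3),\ \mathbf D_l,\ \mathbf E_l$, where no invertibility is assumed---and there the separation genuinely fails. Concretely, in the vector representation of $\mathbf D_l$ the weights $e_1$ and $-e_1$ satisfy $\langle e_1,\alpha\rangle-\langle -e_1,\alpha\rangle=2\langle e_1,\alpha\rangle\equiv 0\ (\modd 2)$ for \emph{every} root $\alpha$, so no polynomial in the $\bar\pi(h_\alpha)$ can separate these two weight spaces; the same happens for the weights $e_1+e_2$ and $e_3+e_4$ of the representation $\omega_2$ of $\mathbf A_3$, and for opposite weights of half-spin representations. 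So the construction you lean on collapses precisely in cases the theorem must cover.

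Your fallback via Burnside can be repaired, but not as you state it: the input needed is that the microweight (minuscule) Weyl modules remain absolutely irreducible in \emph{every} characteristic, including $2$---a true but uninvoked fact (and note that for minuscule $\pi$ one has $\bar\pi(X_\alpha)^2=0$, so the unipotents $E+t\bar\pi(X_\alpha)$ lie in your algebra and group-irreducibility applies). Instead you tie irreducibility to the ``admissible characteristics,'' and your supporting claim is also miscalibrated: the vector representation of $\mathbf C_l$ does not make the lemma fail in characteristic $2$---the paper's computation $(E_{ij}-E_{l+j,l+i})E_{j,l+j}=E_{i,l+j}$ involves no division, so $M_{2l}(R)$ is generated over any ring; the invertibility hypotheses are needed elsewhere (for the adjoint cases quoted from earlier papers, and for Lemma~\ref{generate1}), not here. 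The paper sidesteps weight separation entirely: for microweight representations it produces each matrix unit $E_{\gamma,\gamma'}$ as a product $\pi(X_{\alpha_{i_0}})\cdots\pi(X_{\alpha_{i_k}})$ along a path in the weight diagram whose label sequence occurs exactly once in the diagram; such a product is the signed sum of matrix units over all paths with that labeling, so uniqueness yields a single $\pm E_{\gamma,\gamma'}$, over an arbitrary commutative ring and with no characteristic restriction. If you keep your localization--Nakayama frame, substitute this combinatorial argument (or the corrected Burnside input) for your field step. Finally, your observation about simply connected $\mathbf D_l$ with $l$ even---that every faithful $\pi$ is reducible, so one can only generate the product of the two full block algebras, which still suffices for the normalizer application---is a good catch; the paper is silent on this case, and your patch is the right one.
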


\begin{proof}
For all adjoint Lie algebras under consideration this fact was proved in the papers~\cite{ravnyekorni}, \cite{korni2}, \cite{BunBl}, \cite{bunF4},~\cite{without2}.

For classical representations of classical Lie algebras the proof is clear and direct:

{\bf 1.} If we have the root system $\mathbf A_l$ and the standard representation, then
$$
\pi(X_{e_i-e_j})=E_{ij},\quad \pi(X_{e_i-e_j})\pi(X_{e_j-e_i})=E_{ii},\quad M_{l+1}(R)=\langle E_{ij}\mid 1\leqslant i,j\leqslant l+1\rangle_R.
$$

{\bf 2.} The Lie algebra of the type $\mathbf C_l$ in its universal representation has $2l$-dimensional linear space and the basis
$$
\{ E_{ii}-E_{l+i,l+i}; E_{ij}-E_{l+j,l+i}; E_{i,l+i}; E_{l+i,i}; E_{i,l+j}+E_{j,l+i}; E_{l+i,j}+E_{l+j,i}\mid 1\leqslant i\ne j\leqslant l\}.
$$
Multiplying $E_{ij}-E_{l+j,l+i}$ by $E_{j,l+j}$, we get all $E_{i, l+j}$ for all $1\leqslant i,j \leqslant l$. Multiplying $E_{l+i,i}$ by $E_{ij}-E_{l+j,l+i}$, we obtain $E_{l+i,j}$ for all $1\leqslant i,j \leqslant l$. It is clear that after that we have all $E_{ij}$, $1\leqslant i,j \leqslant l$, and therefore the whole matrix ring $M_{2l}(R)$.

{\bf 3.} For the root system $\mathbf D_l$ 
 the standard representation gives the algebra $\mathfrak{so}_{2l}$, where in $2l$-dimensional space the basis is
 $$
\{ E_{ii}-E_{l+i,l+i}; E_{ij}-E_{l+j,l+i}; E_{i,l+j}-E_{j,l+i}; E_{i+l,j}-E_{j+l,i}\mid 1\leqslant i\ne j\leqslant l\}.
$$
Since for $i\ne j$ we have $(E_{ii}-E_{l+i,l+i})\cdot (E_{ij}-E_{l+j,l+i})=E_{ij}$, then the whole matrix ring $M_{2l}(R)$ is generated by this Lie algebra.

\medskip
All other representations are described by Plotkin, Semenov and Vavilov in~\cite{Atlas} as \emph{microweight} representations with the help of so-called \emph{weight diagrams}. 

Weight diagram is a labeled graph, its vertices correspond (bijectively) to the weights $\lambda \in \Lambda (\pi)$. The vertices corresponding to $\lambda, \mu\in \Lambda (\pi)$, are joined by a bond marked $\alpha_i\in \Delta$ (or simply~$i$) if and only if their difference $\lambda - \mu=\alpha_i$ is a simple root. The diagrams are usually drawn in such way that the marks on the opposite (parallel) sides of a parallelogram are equal and at least one of them is usually omitted. All weights are numbered in any order and give the basis of our representation~$\pi$. If we want to find $\pi (X_{\alpha_i})$, $i=1,\dots, l$, then we need to find all bonds marked by~$i$, and if they join the vertices $(\gamma_1, \gamma_1+\alpha_i),\dots, (\gamma_k,\gamma_k+\alpha_i)$, then 
$$
\pi (X_{\alpha_i})=\pm E_{\gamma_1,\gamma_1+\alpha_i} \pm \dots \pm E_{\gamma_k,\gamma_k+\alpha_i},\quad \pi (X_{-\alpha_i})=\pm E_{\gamma_1+\alpha_i,\gamma_1} \pm \dots \pm E_{\gamma_k+\alpha_i,\gamma_k}.
$$
It is clear that if we take an element $\pi (X_{\alpha_i})\cdot \pi (X_{\alpha_j})$, then it is a sum of $\pm E_{\gamma,\gamma'}$, where there exists a path from the weight $\gamma$ to~$\gamma'$ of the length~$2$ marked by the sequence $(i,j)$. Similarly,  if we take an element $\pi (X_{\alpha_{i_1}})\times   \dots \times \pi (X_{\alpha_{i_k}})$, then it is a sum of $\pm E_{\gamma,\gamma'}$, where there exists a path from the weight $\gamma$ to~$\gamma'$ of the length~$k$ marked by the sequence $(i_1,\dots, i_k)$.

Our goal is to generate all matrix units $E_{\gamma_1,\gamma_2}$, where $\gamma_1,\gamma_2\in \Lambda(\pi)$. Since all weight diagrams are connected, it is sufficient to generate all matrix units $E_{\gamma,\gamma+\alpha_i}$ and $E_{\gamma+\alpha_i,\gamma}$, where $\alpha_i\in \Delta$, $\gamma, \gamma+\alpha_i\in \Lambda(\pi)$.
The general idea how to do it is the following: for any $\gamma\in \Lambda(\pi)$ and any $\alpha_{i_0}\in \Delta$ such that $\gamma+\alpha_{i_0}\in \Lambda(\pi)$ we find $\gamma'\in \Lambda(\pi)$ such that:

(1) there exists a path $(i_0,i_1,\dots, i_k)$ from $\gamma$ to $\gamma'$;

(2) in our weight diagram there is no other path $(i_0,i_1,\dots, i_k)$;

(3) the path $(i_1,\dots, i_k)$ exists only from $\gamma+\alpha_i$ to~$\gamma'$.

Then 
$$
\pi(X_{\alpha_{i_0}})\pi (X_{\alpha_{i_1}})\dots \pi (X_{\alpha_{i_k}})=\pm E_{\gamma,\gamma'}
$$ 
and
$$
 \pi (X_{-\alpha_{i_k}})\dots \pi (X_{-\alpha_{i_1}})=\pm E_{\gamma',\gamma+\alpha_{i_0}}
$$
and therefore $E_{\gamma,\gamma+\alpha_{i_0}}=E_{\gamma,\gamma'}E_{\gamma',\gamma+\alpha_{i_0}}$.

It is almost clear that such $\gamma'$ and  unique paths always exist, we will just show one diagram as an example.

\begin{figure}
\caption{${\bf A}_7$, $\omega_2$}

\begin{picture}(200,120)
\put(110,20){\line(1,0){30}}
\put(140,20){\line(1,1){75}}
\put(140,20){\line(1,-1){15}}
\put(155,5){\line(1,1){75}}
\put(155,35){\line(1,-1){30}}
\put(185,5){\line(1,1){60}}
\put(170,50){\line(1,-1){45}}
\put(215,5){\line(1,1){45}}
\put(185,65){\line(1,-1){60}}
\put(200,80){\line(1,-1){75}}
\put(215,95){\line(1,-1){75}}
\put(245,5){\line(1,1){30}}
\put(275,5){\line(1,1){15}}
\put(290,20){\line(1,0){30}}
\put(110,20){\circle*{4}}
\put(140,20){\circle*{4}}
\put(170,20){\circle*{4}}
\put(200,20){\circle*{4}}
\put(230,20){\circle*{4}}
\put(260,20){\circle*{4}}
\put(290,20){\circle*{4}}
\put(320,20){\circle*{4}}
\put(155,5){\circle*{4}}
\put(185,5){\circle*{4}}
\put(215,5){\circle*{4}}
\put(245,5){\circle*{4}}
\put(275,5){\circle*{4}}
\put(155,35){\circle*{4}}
\put(185,35){\circle*{4}}
\put(215,35){\circle*{4}}
\put(245,35){\circle*{4}}
\put(275,35){\circle*{4}}
\put(170,50){\circle*{4}}
\put(200,50){\circle*{4}}
\put(230,50){\circle*{4}}
\put(260,50){\circle*{4}}
\put(185,65){\circle*{4}}
\put(215,65){\circle*{4}}
\put(245,65){\circle*{4}}
\put(200,80){\circle*{4}}
\put(230,80){\circle*{4}}
\put(215,95){\circle*{4}}
\put(120,10){$\mathbf 2$}
\put(140,3){$\mathbf 1$}
\put(170,3){$\mathbf 2$}
\put(200,3){$\mathbf 3$}
\put(222,3){$\mathbf 5$}
\put(252,3){$\mathbf 6$}
\put(282,3){$\mathbf 7$}
\put(303,10){$\mathbf 6$}
\put(140,30){$\mathbf 3$}
\put(155,45){$\mathbf 4$}
\put(170,60){$\mathbf 5$}
\put(185,75){$\mathbf 6$}
\put(200,90){$\mathbf 7$}
\put(225,90){$\mathbf 1$}
\put(240,75){$\mathbf 2$}
\put(255,60){$\mathbf 3$}
\put(270,45){$\mathbf 4$}
\put(285,30){$\mathbf 5$}
\put(100,10){$\gamma_1$}
\put(130,10){$\gamma_2$}
\put(150,-5){$\gamma_3$}
\put(150,25){$\gamma_4$}
\put(165,25){$\gamma_5$}
\put(165,40){$\gamma_6$}
\put(180,-5){$\gamma_7$}
\put(180,25){$\gamma_8$}
\put(180,55){$\gamma_9$}
\put(195,25){$\gamma_{10}$}
\put(195,40){$\gamma_{11}$}
\put(195,70){$\gamma_{12}$}
\put(210,-5){$\gamma_{13}$}
\put(210,25){$\gamma_{14}$}
\put(210,55){$\gamma_{15}$}
\put(210,85){$\gamma_{16}$}
\put(225,25){$\gamma_{17}$}
\put(225,40){$\gamma_{18}$}
\put(225,70){$\gamma_{19}$}
\put(240,-5){$\gamma_{20}$}
\put(240,25){$\gamma_{21}$}
\put(240,55){$\gamma_{22}$}
\put(255,25){$\gamma_{23}$}
\put(255,40){$\gamma_{24}$}
\put(270,-5){$\gamma_{25}$}
\put(270,25){$\gamma_{26}$}
\put(285,10){$\gamma_{27}$}
\put(315,10){$\gamma_{28}$}
\end{picture}
\end{figure} 

If we take the case $\mathbf A_7$ with the weight $\omega_2$, the representation is $28$-dimensional. Let us find a path which gives $E_{\gamma_1,\gamma_2}$. Since the path $(1,3)$ is unique in the diagram, then the path $(2,1,3)$ is also unique and we have 
$$
E_{\gamma_1,\gamma_2}=(\pi (X_{\alpha_2})\pi(X_{\alpha_1})\pi(X_{\alpha_3}))\cdot (\pi(X_{-\alpha_3})\pi(X_{-\alpha_1}).
$$
If we want to generate, for example, $E_{\gamma_4,\gamma_6}$, then the suitable path is $(4,1,5)$, since the path $(1,5)$ is unique in the diagram.

Looking at the picture it is easy to find the suitable path for any pair of neighboring vertices.

Therefore the lemma is proved for all the cases.
\end{proof}

Since $y \pi(\mathcal L (\Phi)_R)y^{-1}=\pi(\mathcal L (\Phi)_R$ and $\pi(\mathcal L (\Phi)_R$ generates the whole matrix ring $M_N(R)$, then $yM_N(R)y^{-1}=M_N(R)$. Therefore $y G_\pi(\Phi,R)y^{-1}\subseteq \SL_N(R)$. From the other side, since $y\in G_\pi (\Phi,\widetilde S)$, then $y G_\pi(\Phi,R)y^{-1}\subseteq G_\pi (\Phi,\widetilde S)$. Since $G_\pi (\Phi,\widetilde S) \cap \SL_N(R)$ is (by definition) the Chevalley group $G_\pi (\Phi,R)$, then $y$ normalizes~$G$.

Now we know that $\varphi_2$ is an automorphism of~$G=G_\pi(\Phi,R)$, identical on the elementary subgroup $E=E_\pi(\Phi,R)$. Let us take some $g\in G$ and $x_1\in E$ and let $gx_1g^{-1}=x_2\in E$. Then
$$
\varphi_2(g) \varphi_2(x_1) \varphi_2(g)^{-1}=\varphi_2(x_2)\Longrightarrow \varphi_2(g) x_1 \varphi_2(g)^{-1}=x_2,
$$
therefore
$$
  \varphi_2(g) x_1 \varphi_2(g)^{-1} =g x_1 g^{-1}\Longrightarrow (g^{-1}\varphi_2(g)) x_1 (g^{-1}\varphi_2(g))^{-1}=x_1,
$$
so
$$
g^{-1}\varphi_2(g) \in C_G (E).
$$
By the main theorem from~\cite{v18} $C_G(E)=Z(G)$, therefore
$$
\varphi_2(g)=c_g \cdot g,\quad c_g\in Z(G)\text{ for all } g\in G.
$$

Whence $\varphi_2$ is a central automorphism of~$G$ and the initial $\varphi$ is the composition of graph, ring, inner and central automorphisms, i.\,.e, $\varphi$ is standard.

The theorem is proved.

\section{Some applications: isomorphisms and model theory of Chevalley groups}\leavevmode

Standard description of automorphisms of Chevalley groups allows to describe and classify Chevalley groups up to different type of equivalencies and also to study model-theoretic properties. 


\begin{theorem}\label{isom_general}
Let $G_1=G_{\pi_1}(\Phi_1,R_1)$ and $G_2=G_{\pi_2}(\Phi_2,R_2)$ 
be two  Chevalley groups of ranks $>1$,  $R_1$, $R_2$ be  commutative rings with~$1$. Suppose that for $\Phi_1 = \mathbf A_2, \mathbf B_l, \mathbf C_l$ or $\mathbf F_4$ we have $1/2\in R_1$, for $\Phi_1=\mathbf G_2$ we have $1/2,1/3 \in R_1$. Then every isomorphism between the groups~$G_1$ and~$G_2$ is standard: it is a composition of  inner, diagram and central automorphisms of~$G_1$ and ring isomorphism between $G_1$ and~$G_2$.
\end{theorem}

\begin{proof}
The proof  is identical to the proof of Theorem~9 from~\cite{Bunina_recent}. One needs to replace the references to Theorem~1 in the proof by those to Theorem~2. 
\end{proof}

\begin{remark}
 The result of Theorem~\ref{isom_general} is valid with respect to elementary Chevalley groups $E_{\pi_1}(\Phi_1,R_1)$ and $E_{\pi_2}(\Phi_2,R_2)$ as well. 
\end{remark}

\begin{corollary}[classification of Chevalley groups up to isomorphism]
 Under conditions from Theorem~\ref{isom_general} two Chevalley groups  $G_1$ and $G_2$ (elementary Chevalley groups, respectively) are isomorphic if and only if they have the same root systems $\Phi_1$ and~$\Phi_2$, same weight lattices $\Lambda_{\pi_1}$ and $\Lambda_{\pi_2}$ and isomorphic rings $R_1$ and~$R_2$.
\end{corollary}

\begin{proof}
If $G_1\cong G_2$, then there exists an isomorphism $\varphi : G_1\to G_2$, which is composition of a ring isomorphism $\rho: G_1\to G_2$ and some automorphism $\psi\in \Aut G_1$ (according to Theorem~\ref{isom_general}). Therefore there exists a ring isomorphism between $G_1$ and~$G_2$, i.\,e., $G_1$ and $G_2$ have the same root systems, weight lattices and isomorphic rings.
\end{proof}

Another application of Theorem~\ref{isom_general} is classification of Chevalley groups up to elementary equivalence (for adjoint Chevalley groups it was done in~\cite{Bunina_recent}).

\begin{definition}
Two algebraic systems $\mathcal M_1$ and $\mathcal M_2$ of the same language~$\mathcal    L$ are called \emph{elementarily equivalent}, if their first order theories coincide.
\end{definition}

\begin{theorem}[Keisler--Shelah Isomorphism theorem, \cite{chapter1-21}, \cite{chapter1-20}]\label{isomorp}
Two models ${\mathcal M}_1$ and ${\mathcal M}_2$ of the same language are elementarily equivalent if and only if there exists an ultrafilter~${\mathcal F}$ such that
$$
\prod_{\mathcal F} {\mathcal M}_1\cong \prod_{\mathcal F}{\mathcal M}_2.
$$
\end{theorem}

\begin{corollary}[classification of Chevalley groups up to elementary equivalence]
Under conditions from Theorem~\ref{isom_general} two Chevalley groups  $G_1$ and $G_2$ (elementary Chevalley groups, respectively) are elementarily equivalent if and only if they have the same root systems $\Phi_1$ and~$\Phi_2$, same weight lattices $\Lambda_{\pi_1}$ and $\Lambda_{\pi_2}$ and elementarily equivalent rings $R_1$ and~$R_2$.
\end{corollary}

\begin{proof}
By Theorem~\ref{isomorp} the groups $G_1$ and $G_2$ are elementarily equivalent if and only if for some ultrafilter~$\mathcal F$ their ultrapowers are isomorphic. Since
$$
\prod_{\mathcal F} G_\pi(\Phi,R) \cong G_\pi (\Phi, \prod_{\mathcal F} R),
$$
the latter is equivalent to
$$
G_{\pi_1} (\Phi_1, \prod_{\mathcal F}R_1)\cong G_{\pi_2} (\Phi_2, \prod_{\mathcal F}R_2) \Longleftrightarrow 
\begin{cases}
\Lambda_{\pi_1} = \Lambda_{\pi_2},\\
\Phi_1=\Phi_2,\\
\prod_{\mathcal F} R_1\cong \prod_{\mathcal F} R_2,
\end{cases}
\Longleftrightarrow 
\begin{cases}
\Lambda_{\pi_1} = \Lambda_{\pi_2},\\
\Phi_1=\Phi_2,\\
 R_1\equiv R_2,
\end{cases}
$$
what was required.
\end{proof}

Two last corollaries almost finalize classification of Chevalley groups over commutative rings up to isomorphisms and elementary equivalence. However,  there are still open questions concerning the relations  of Chevalley groups with model theory. 

In the recent work of D.\,Segal and K.\,Tent~\cite{Segal-Tent} the question of bi-interpretability of Chevalley groups over integral domains was considered (see \cite{Segal-Tent} and \cite{Khael-Miasn} for the definition of \emph{bi-interpretability}):

\begin{theorem}[\cite{Segal-Tent}]\label{Segal-Tent}
Let $G(R)=G_\pi(\Phi,R)$ be a Chevalley group of rank at
least two, and let $R$ be an integral domain. Then $R$ and $G(R)$ are bi-interpretable
provided either

\emph{(1)} $G$ is adjoint, or

\emph{(2)} $G(R)$ has finite elementary width,\\
assuming in case $\Phi=\mathbf E_6, \mathbf E_7, \mathbf E_8$, or $\mathbf F_4$ that $R$ has at least two units.
\end{theorem}

In the paper \cite{Bunina-bi}  \emph{regular} bi-interpretabilty of Chevalley groups over local rings was obtained. This result used the ideas from~\cite{Segal-Tent} along with description of isomorphisms between Chevalley groups over local rings. It has also been proved that the class of Chevalley groups over local rings is \emph{elementarily definable}: \emph{any group that is elementarily equivalent to some Chevalley group over a local ring is also a Chevalley group (of the same type) over a local ring} (see~ \cite{Bunina-bi}). Theorem~2 and~\ref{isom_general} of the current paper allows us to prove regular bi-interptretability and elementary definability of adjoint Chevalley groups and Chevalley groups  of finite elementary width over arbitrary commutative rings.

\medskip

{\bf Acknowledgements.}
Our sincere thanks go to Eugene Plotkin for very useful discussions regarding various aspects of this work and permanent attention to it.

\end{document}